\documentclass[11pt]{article}

\usepackage[utf8]{inputenc}
\usepackage[T1]{fontenc}
\usepackage{mathpazo}

\RequirePackage[english]{babel}

\RequirePackage[a4paper,margin=2.5cm]{geometry}
\RequirePackage{parskip}

\newcommand{\affiliation}{\footnote}
\makeatletter
\def\@fnsymbol#1{\ensuremath{\ifcase#1\or *\or \dagger\or \ddagger\or \mathsection\or \|\or **\or \dagger\dagger \or \ddagger\ddagger \else\@ctrerr\fi}}
\makeatother

\usepackage{xcolor}
\definecolor{cblue}{RGB}{0,70,140}
\definecolor{cgreen}{RGB}{100,140,0}
\definecolor{cred}{RGB}{190,10,50}

\usepackage[shortlabels]{enumitem}
\setlist{itemsep=0ex,topsep=0ex,parsep=0.4ex}

\usepackage{amsmath,amsfonts,amssymb,amsthm,mathtools,thmtools,thm-restate,bbm,centernot}

\usepackage[hyphens]{url} % Line breaking of urls
\usepackage[hidelinks,colorlinks,pagebackref]{hyperref} % Coloring of hyperlinks and backreferences in bibliography; must be loaded after url
\hypersetup{citecolor=cgreen,linkcolor=cblue,urlcolor=cblue}
\usepackage[capitalise,nameinlink,noabbrev,compress]{cleveref} % References in the document; must be loaded after hyperref and amsmath

\renewcommand*{\backref}[1]{}
\renewcommand*{\backrefalt}[4]{
	\ifcase #1 Not cited.%
	\or $\uparrow$#2%
	\else $\uparrow$#2%
	\fi%
}

\theoremstyle{plain}
\newtheorem{theorem}{Theorem}[section]

\newtheorem{proposition}[theorem]{Proposition}

\theoremstyle{definition}
\newtheorem{definition}[theorem]{Definition}

\makeatletter
\renewenvironment{proof}[1][\proofname]
{\par\pushQED{\qed}
	\normalfont\topsep6\p@\@plus6\p@\relax\trivlist
	\item[\hskip\labelsep\bfseries#1\@addpunct{.}]
	\ignorespaces}
{\popQED\endtrivlist\@endpefalse}
\makeatother

\newcommand{\eps}{\varepsilon}

\newcommand{\cG}{\mathcal{G}}

\title{A note on Ramsey numbers for minors}
\author{Maria  Axenovich \affiliation{Institute of Algebra and Geometry, Karlsruhe Institute of Technology, Germany (\textsf{\href{mailto:maria.aksenovich@kit.edu}{maria.aksenovich@kit.edu}})}\and Raphael Steiner \affiliation{Department of Mathematics, ETH Zurich, Switzerland (\textsf{\href{raphaelmario.steiner@math.ethz.ch}{raphaelmario.steiner@math.ethz.ch}})}}
\date{\today}
  
\begin{document}
 \maketitle

  \begin{abstract}
  Let $R_h(k; \ell)$ be the smallest integer $n$ such that any edge coloring of a complete graph on $n$ vertices in $\ell$ colors results in a monochromatic $K_k$-minor, in other words, a graph with Hadwiger number $k$, i.e., a graph that could be transformed into a clique $K_k$ on $k$ vertices via a sequence of edge contractions and vertex deletions.  More generally, for a graph $F$ and integer $\ell$ let $R_h(F;\ell)$ be the smallest integer $n$ such that any edge coloring of a complete graph on $n$ vertices in $\ell$ colors results in a monochromatic $F$-minor.

In 2001 Thomason and in 2005  Myers and Thomason asymptotically determined the extremal numbers for clique minors and $F$-minors, respectively. They found the respective explicitly computable leading constants $\beta=0.265656...$ and $\gamma(F)\cdot \beta$ for these extremal numbers. 

\noindent We determine $R_h(F;2)$ for every graph $F$ as
  $$R_h(F;2)=(\gamma(F)+o(1))|V(F)|\sqrt{\log_2(|V(F)|)},$$ %where $\gamma(F)$ is an easily computable graph parameter defined by Myers and Thomason (Combinatorica, 2005) and 
  where the $o(1)$-term tends to zero as $|V(F)|\rightarrow \infty$. In particular,
  $$R_h(k;2)=(1+o(1))k\sqrt{\log_2 k}.$$
  
  When $\ell\gg k \gg 1$, we show that 
  $$  R_h(k; \ell) =  (2\beta+o(1))   \ell k \sqrt{\log_2 k}.$$ 
  %where $\beta=0.265656...$ is a specific constant.
  \end{abstract}
  
  \section{Introduction}
 
 The classical {\it Ramsey number}  $R(k)$ is defined as the smallest integer $n$ such that any edge coloring of a complete graph $K_n$ on $n$ vertices with two colors contains a monochromatic $K_k$. This notion was generalised in many different ways, in particular ``through parameter''. Specifically, when $\xi$ is some graph parameter, then $R_{\xi}(k)$ is defined as the smallest $n$ such that any edge coloring  of $K_n$ in two colors contains a monochromatic subgraph $G$ with $\xi(G)=k$.  So, $R(k)=R_\omega(k)$, where $\omega(G)$ is the clique number of $G$.  One of the classical graph parameters is the {\it Hadwiger number} $h(G)$ which is the largest $k$ so that $G$ contains a $K_k$-minor, that means that $K_k$ can be obtained from $G$ by edge-contractions and vertex-deletions.  In this note, we shall discuss the Ramsey number for minors $R_h(k)$.  Although it is very natural and complements extensive research on Hadwiger's conjecture, it seems that it never was explicitly addressed in the literature. The Hadwiger conjecture \cite{H} states  that  for any graph $G$, $h(G)\geq \chi(G)$, where $\chi(G)$ is the chromatic number of $G$, see for example Delcourt and Postle \cite{DP}, see also~\cite{Seymour} for a survey.   A related density question is to bound $h(G)$ in terms of the average degree of $G$.  For each $k\geq 2$, let
  $$c(k):= {\rm min} \{c: |E(G)|\geq c|V(G)| \Rightarrow h(G)\geq k \}.$$
  The fact that one could take minimum and not just an infimum in the above definition is due to Mader \cite{M-0}.
   Thomason  \cite{T} determined $c(k)$ asymptotically, see some related papers \cite{AKS, T-two-problems, T-survey, TW}.     
   Let $\lambda<1$ be the solution of the equation 
$1-\lambda +2\lambda \ln \lambda =0$ and 
    and $\beta = (1-\lambda)/2\sqrt{\ln (1/\lambda)}/\sqrt{\log_2e}$. Here, $\beta =0.3190863.../\sqrt{\log_2 e} = 0.265656...$.
   
   \begin{theorem}[Thomason \cite{T}]\label{Thomason-1}
  We have  $$c(k) = (1+o(1))\beta k \sqrt{\log_2 k}.$$
   \end{theorem}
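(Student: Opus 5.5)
This is Thomason's theorem, which the paper cites rather than proves, so I outline the standard two-part argument: a random-graph lower bound and a structural upper bound.

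\emph{Lower bound.} Fix $p$ and $\lambda=1-p$, and take $G=G(n,p)$ with $n$ chosen so that a.a.s.\ $h(G)<k$.
\begin{itemize}
\item A $K_k$-minor with branch sets $V_1,\dots,V_k$ may be assumed to use few vertices in total, roughly $k\cdot t$ vertices. Heuristically, most branch sets are then small.
\item For sets of sizes $a_i$, the probability that a given pair $V_i,V_j$ has no edge between them is $\lambda^{a_ia_j}$.
\item A union bound over all choices of branch sets, using that connectivity inside each set costs at most a polynomial factor, shows that no $K_k$-minor exists once $n\approx k\sqrt{\log k}/\sqrt{\log(1/\lambda)}$.
\item The number of edges is $p\binom n2$, so $c(k)\ge \tfrac12 (1-\lambda)\,n/\sqrt{\log_{1/\lambda} k}$ up to $1+o(1)$.
\item Optimising over $\lambda$ gives the defining equation $1-\lambda+2\lambda\ln\lambda=0$. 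Rewriting the result in base-$2$ logarithms produces the constant $\beta$.
\item A disjoint union of copies of this random graph gives arbitrarily large $|V(G)|$ at the same edge density.
\end{itemize}
The main technical point here is controlling the union bound when branch set sizes vary. This is handled by convexity: equal sizes are essentially worst.

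\emph{Upper bound.} Take a minor-minimal graph $G$ with $|E(G)|\ge c|V(G)|$, where $c=(1+\eps)\beta k\sqrt{\log_2 k}$.
\begin{itemize}
\item By Mader-type minimality, every edge lies in at least $c$ triangles and $\delta(G)\le 2c$.
\item The neighbourhood of a minimum-degree vertex therefore gives a graph $H$ on at most $2c+1$ vertices with minimum degree at least $c$, so density is at least about $1/2$.
\item Via further minimality or connectivity arguments one reduces to the case where $H$ is dense and has few vertices, $|V(H)|=O(k\sqrt{\log k})$, with edge density $p'$.
\end{itemize}

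The key step is to show that such an $H$ contains a $K_k$-minor whenever $|V(H)|\ge(1+\eps)k\sqrt{\log_{1/(1-p')}k}$. This matches the random-graph threshold, so the target is to show that pseudo-random graphs are extremal.

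\begin{itemize}
\item Split into two cases: $H$ is quasi-random, or $H$ has a denser part.
\item \emph{Denser part.} Pass to a subgraph $H'$ with a better ratio of order to $\sqrt{\log}$ of density and iterate. This needs a careful potential function $|V|/\sqrt{\log_{1/(1-p)}}$ that improves under density increments.
\item \emph{Quasi-random case.} Pick random disjoint branch sets of size about $\sqrt{\log_{1/\lambda}k}$. Quasi-randomness ensures the expected number of missing pairs is $o(k)$. These can be repaired using a reserved set of spare vertices for connectivity and for the missing adjacencies.
\end{itemize}

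I expect the main obstacle to be the density-increment step: it must lose only a $1+o(1)$ factor overall. I would handle it with Thomason's trick of working in a class of graphs closed under taking the densest dense spot, tracking $\lambda$ continuously. Combining both directions gives $c(k)=(1+o(1))\beta k\sqrt{\log_2 k}$.
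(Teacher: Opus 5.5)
\paragraph{The gap: the neighbourhood reduction halves the ratio.}
The gap is in the upper bound, at the step from the minor-minimal $G$ to $H=G[N(v)]$. All you know about $H$ is $\delta(H)\ge c$ on $n_1\le 2c+1$ vertices, i.e.\ $e(H)\gtrsim c|H|/2$. You have halved the edge-to-vertex ratio, and this costs exactly a factor $2$. The density only satisfies $p'\gtrsim c/n_1$, so the most you can guarantee toward the hypothesis of your key step is
\[
n_1\sqrt{\log_2\frac{1}{1-p'}}\;\ge\;c\cdot\min_{0<x<1}\frac{\sqrt{\log_2\bigl(1/(1-x)\bigr)}}{x}\;=\;\frac{c}{2\beta}\;=\;\frac{1+\eps}{2}\,k\sqrt{\log_2 k}.
\]
Here $(1-\lambda)/\sqrt{\log_2(1/\lambda)}$ is maximised, with value $2\beta$, at the root of $1-\lambda+2\lambda\ln\lambda=0$.

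This is half of the required $(1+\eps)k\sqrt{\log_2 k}$, and the bound is sharp. Take a random graph of density slightly above $1-\lambda$ on about $c/(1-\lambda)\approx 1.4c$ vertices. It satisfies $|H|\le 2c+1$ and $\delta(H)\ge c$, and it is quasi-random, so no density increment is available. Yet its Hadwiger number is only about $\frac{1+\eps}{2}k$. So no argument using only $|H|\le 2c+1$ and $\delta(H)\ge c$ can succeed; this route gives at best $c(k)\le(2\beta+o(1))k\sqrt{\log_2 k}$.

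The loss is intrinsic to working inside one neighbourhood. Even in the extremal graph $G(n,1-\lambda)$ at ratio $c$, a vertex neighbourhood has Hadwiger number only about $(1-\lambda)(1+\eps)k\approx 0.72k$. Your bullet ``via further minimality or connectivity arguments one reduces to\dots'' is exactly where the missing idea would have to go.

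\paragraph{What the upper bound needs.}
You need a reduction that preserves the ratio. Concretely, find a minor $H$ of $G$ with:
\begin{itemize}
\item $e(H)\ge(1-o(1))c|H|$, i.e.\ average degree about $2c$, not minimum degree $c$;
\item order $k^{1+o(1)}$;
\item connectivity at least $|H|\log\log\log|H|/\log\log|H|$.
\end{itemize}
Then $p\gtrsim 2c/|H|$, so for \emph{every} density $p$,
\[
|H|\sqrt{\log_2\bigl(1/(1-p)\bigr)}\;\gtrsim\;\frac{2c\sqrt{\log_2\bigl(1/(1-p)\bigr)}}{p}\;\ge\;\frac{c}{\beta}\;=\;(1+\eps)k\sqrt{\log_2 k},
\]
with equality only at $p=1-\lambda$. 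This is how $\beta$ enters the upper bound, and Theorem~\ref{Thomason-2} then gives the $K_k$-minor. Supplying this reduction is the real content missing from your outline.

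\paragraph{The connectivity hypothesis is essential.}
Your ``quasi-random or denser part'' dichotomy does not replace it. Take two disjoint copies of $G(n/2,1/2)$ with $n$ slightly above $1.55(1+\eps)k\sqrt{\log_2 k}$. This graph has density about $1/4$ and satisfies $n\ge(1+\eps)k\sqrt{\log_{4/3}k}$. Yet its Hadwiger number is only about $0.78(1+\eps)k$. Moreover, the denser part (a component) has \emph{smaller} potential $|V|\sqrt{\log_2(1/(1-p))}$: $0.5n$ against $0.64n$. So your key step is false as stated.

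\paragraph{Minor slip in the lower bound.}
The ratio there is $\frac12(1-\lambda)n$ with $n\approx k\sqrt{\log_{1/\lambda}k}$, not $\frac12(1-\lambda)n/\sqrt{\log_{1/\lambda}k}$. Otherwise that half is fine, including the optimisation leading to $1-\lambda+2\lambda\ln\lambda=0$.
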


The lower bound on $c(k)$ in the above theorem is achieved by the Erd\H{o}s-Renyi random graph $G(n,p)$, whose Hadwiger number is well understood.      
 \begin{theorem}[Bollob\'as, Catlin, and Erd\H{o}s \cite{BCE}]\label{BCE}
   For a fixed $p$ and $q=1-p$, 
    $$h(G(n,p)) = (1+o(1))n \left(\frac{\log (1/q)}{ \log n} \right)^{1/2}.$$
    \end{theorem}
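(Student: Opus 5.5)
We prove the two inequalities separately, writing $t=(1+o(1))n\sqrt{\log(1/q)/\log n}$ for the target value. Logarithms are natural unless indicated. Note that $\log(1/q)/\log n$ is base-independent as long as both logs use the same base.

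\textbf{Upper bound.} Suppose $G=G(n,p)$ has a $K_k$-minor, and take a minimal model: disjoint connected branch sets $V_1,\dots,V_k$ with an edge between every pair. Deleting unused vertices, we may assume the total size $\sum|V_i|=m\le n$. For each pair $i<j$, the probability that there is no edge between $V_i$ and $V_j$ is $q^{|V_i||V_j|}$. Since the sets are disjoint, these events are independent across pairs. Hence, for a fixed partition of $m$ vertices into parts of sizes $s_1,\dots,s_k$, the probability that all pairs are joined is
\[
\prod_{i<j}\bigl(1-q^{s_is_j}\bigr).
\]
We take a union bound over all choices of branch sets; there are at most $k^n$ assignments of vertices to $k$ labelled parts or a "deleted" class, i.e. at most $(k+1)^n$. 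The key estimate is a lower bound on the number of pairs $\{i,j\}$ with $s_is_j$ small. Suppose $k=(1+\varepsilon)n\sqrt{\log(1/q)/\log n}$. Then the average part size is $d=n/k\approx\sqrt{\log n/\log(1/q)}/(1+\varepsilon)$, and most parts have size $O(d)$. For a pair of parts with $s_is_j\le (1+\varepsilon/2)^{-1}\log n/\log(1/q)$, we get $1-q^{s_is_j}\le \exp(-n^{-1+\delta})$. By convexity (or by counting: at least $k/2$ parts have size at most $2d$), there are $\Omega(k^2)$ pairs whose product is at most about $d^2$. We then need $k^2 n^{-1+\delta}\gg n\log k$, which holds since $k^2\approx n^2/\log n$.

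This requires care. With parts of size about $d$, the product is $d^2\approx\log n/((1+\varepsilon)^2\log(1/q))$, so $q^{d^2}=n^{-1/(1+\varepsilon)^2}$. The number of such pairs is about $k^2/2$, so the failure exponent is about $k^2n^{-1/(1+\varepsilon)^2}$, which beats $n\log n$. Unequal part sizes must also be handled: a few large parts cost vertices. The cleanest route is to restrict to the at least $k/2$ parts of size at most $2d$. Then $q^{4d^2}=n^{-4/(1+\varepsilon)^2}$, which is too weak. To fix this, choose a threshold $(1+\varepsilon/3)d$ instead. By Markov's inequality, a constant fraction $c(\varepsilon)k$ of the parts have size at most this threshold, and the exponent becomes $n^{-1/(1+\varepsilon/3)^2\cdot(1+\varepsilon)^{-2}\cdots}$; the bound is then arranged to be $n^{-1+\delta}$. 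This yields $P(h\ge k)\to 0$. I expect tuning this threshold to be the main obstacle in the upper bound.

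\textbf{Lower bound.} Let $k=(1-\varepsilon)n\sqrt{\log(1/q)/\log n}$. We construct the minor greedily. Fix the branch set size $s=\lceil(1-\varepsilon/2)\sqrt{\log n/\log(1/q)}\,\rceil$ and partition most of $V$ into $k$ sets of size $s$. The sets need not be connected yet: we reserve a set $W$ of $\varepsilon n/10$ vertices to serve as connectors. A pair of parts $(V_i,V_j)$ is non-adjacent with probability $q^{s^2}=n^{-(1-\varepsilon/2)^2}$. Therefore the expected number of bad pairs is about $k^2n^{-1+\varepsilon'}=o(k^2/\log)$. Moreover, each part is involved in few bad pairs with high probability, by Chernoff bounds and independence across pairs for a fixed partition.

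Now fix the bad pairs and make the parts connected using $W$. Alternatively, delete a small set of parts to remove all bad pairs, using Turán's theorem on the sparse "bad graph": an independent set of size $k(1-o(1))$ exists because the average degree is $o(1)\cdot$ polylog. This step needs the bad graph to have average degree $o(1)$, which requires $k n^{-(1-\varepsilon/2)^2}\to 0$. Since $k\approx n/\sqrt{\log n}$, we get $kn^{-1+\varepsilon'}=n^{\varepsilon'}/\sqrt{\log n}$, which is not $o(1)$. So instead we choose $s$ slightly larger, $s^2=\log n/\log(1/q)$ times $(1+\eta)$, making each bad-pair probability $n^{-1-\eta'}$, while keeping $k=n/s$ within the $(1+o(1))$ factor.

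For connectivity, each part of size $s\approx\sqrt{\log n}$ is made connected via connectors in $W$. Each vertex of a part has about $p|W|$ neighbours in $W$. We pick, greedily and disjointly, one common-neighbour connector per part (a vertex of $W$ adjacent to all $s$ members); this has probability $p^s=n^{-o(1)}$, and $|W|=\varepsilon n/10$ supplies enough of them with high probability by a greedy argument. The resulting number of branch sets is $(1-o(1))k$. The step I expect to be hardest is balancing the choice of $s$ so that bad pairs are rare while $k$ stays asymptotically optimal.
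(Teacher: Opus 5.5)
The paper does not prove this statement. It is quoted from Bollob\'as, Catlin and Erd\H{o}s as background, so there is no in-paper argument to compare against. On its own merits, the plan you end up with is correct and is the standard argument. That plan is a first-moment bound over all minor models for the upper bound, and for the lower bound a fixed partition into sets of size about $\sqrt{\log_{1/q} n}$, deletion of parts lying in non-adjacent pairs, and private connectors. The write-up should be cut down to that plan, and two steps need to be stated properly.

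\textbf{Upper bound.} Discard the claim about $\Omega(k^2)$ pairs with product ``about $d^2$'', which is false for parts of size up to $2d$, and discard the garbled exponent. Since $\sum_i s_i\le n=kd$, Markov gives more than $\frac{\varepsilon/3}{1+\varepsilon/3}k$ parts with $s_i\le(1+\varepsilon/3)d$. For any two of them, $q^{s_is_j}\ge n^{-(1+\varepsilon/3)^2/(1+\varepsilon)^2}=n^{-1+\delta}$ with $\delta=\delta(\varepsilon)>0$. So a fixed choice of branch sets has all pairs adjacent with probability at most $\exp(-\Omega(k^2n^{-1+\delta}))=\exp(-\Omega(n^{1+\delta}/\log n))$. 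This beats the $(k+1)^n=e^{O(n\log n)}$ choices, so tuning the threshold is not actually an obstacle.

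\textbf{Lower bound: choice of $s$.} The attempt with $s=(1-\varepsilon/2)\sqrt{\log_{1/q}n}$ and Tur\'an's theorem is a dead end and should go. There is also no balancing issue. Take $s=\lceil\sqrt{\log_{1/q}n}\,\rceil$ and $k=\lfloor(1-\varepsilon)n/s\rfloor$. The expected number of non-adjacent pairs is at most $\binom{k}{2}q^{s^2}\le k^2/(2n)\le k/(2s)=o(k)$. By Markov only $o(k)$ parts are deleted, and $ks\le(1-\varepsilon)n$ leaves room for $W$.

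\textbf{Lower bound: connectors.} This is where your justification is too thin. You cannot argue that each part has more than $k$ suitable connectors, because $|W|p^s=ne^{-\Theta(\sqrt{\log n})}$ is much smaller than $k=\Theta(n/\sqrt{\log n})$. What makes the greedy procedure work is fresh randomness:
\begin{enumerate}
\item The events ``$w$ is complete to $V_i$'' depend on pairwise disjoint edge sets, so they are mutually independent.
\item The connectors chosen for $V_1,\dots,V_{i-1}$ depend only on edges between $W$ and those parts.
\item At most $k=o(|W|)$ connectors are ever used. So at step $i$ at least $|W|/2$ unused connectors each work independently with probability $p^s$.
\item Hence step $i$ fails with probability at most $\exp(-|W|p^s/2)=\exp(-n^{1-o(1)})$, and a union bound over the $k$ steps finishes.
\end{enumerate}

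Alternatively, you can drop $W$ entirely. Each $G[V_i]$ is a copy of $G(s,p)$, which is disconnected with probability $O(sq^{s/2})=o(1)$. These events are independent over $i$ and of the edges between parts. So discarding the $o(k)$ internally disconnected parts also yields a $K_{(1-o(1))k}$-minor.
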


In addition, Thomason \cite{T}  proved that  the Hadwiger number for graphs whose connectivity is not too small is at least one  of a random graph with the same density. Here, the density of a graph $G$ is $|E(G)|/\binom{|V(G)|}{2}$.
\begin{theorem}[Theorem 4.1, Thomason \cite{T}]\label{Thomason-2}
Let $0<\varepsilon<1$. There is $n_0=n_0(\varepsilon)$, such that if $G$ is a graph on $n$ vertices, $n>n_0$,   of density $p$,  and connectivity at least $n (\log \log \log n)/ (\log \log n)$, then  
$$h(G)\geq \left\lceil (1-\varepsilon) n \left(  \frac{\log (1/q^*)}{\log n } \right)^{1/2}\right\rceil,$$ 
where $q^*= \max\{ 1-p, (\log n)^{-1/\varepsilon}\}$.
\end{theorem}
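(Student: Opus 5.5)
The plan is to imitate the Bollob\'as--Catlin--Erd\H{o}s argument behind \cref{BCE}: build the minor from random ``bags'' of size about $t=(1-\varepsilon/3)\sqrt{\log n/\log(1/q^*)}$. Density alone will give many edges between bags, and the connectivity hypothesis will be used only to make each bag connected. Set $k=\lceil(1-\varepsilon)n/t'\rceil$ with $t'$ slightly larger than $t$. Split $V(G)$ at random into a ``core'' part $U$ of size $(1-\eta)n$ and a ``reservoir'' $W$ of size $\eta n$, where $\eta=\eta(\varepsilon)$ is small. We will choose $k$ disjoint $t$-sets $A_1,\dots,A_k\subseteq U$, connect each $A_i$ into a tree using few vertices of $W$, and require that almost all pairs $A_i,A_j$ span an edge. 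The few defective pairs are then removed by discarding a small fraction of the bags, which costs only a factor $1-\varepsilon/3$.

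\textbf{Step 1 (edges between bags).} First I would pass to a subgraph on which the edges are spread out. Using a density-increment argument, I would find an induced subgraph $G'$ on $m\ge n^{1-o(1)}$ vertices with density $p'\ge p-o(1)$ such that no two disjoint sets of size $t$ in a random partition of $G'$ are likely to span no edges. Concretely, the target is
\[
\Pr[e(A_i,A_j)=0]\le (q^*)^{(1-o(1))t^2}\le n^{-(1-\varepsilon/3)^2}.
\]
The truncation $q^*\ge(\log n)^{-1/\varepsilon}$ is exactly what makes $t\ge\sqrt{\varepsilon}\,\sqrt{\log n/\log\log n}\to\infty$, so that the relevant concentration holds. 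The estimate itself would come from counting: for a uniformly random pair of disjoint $t$-sets, I would compare the probability of no edges with the $t^2$-th moment of the bipartite densities and apply convexity. The expected number of bad pairs is then at most $k^2n^{-(1-\varepsilon/3)^2}=o(k)$, so deleting one bag from each bad pair leaves $(1-o(1))k$ bags that are pairwise adjacent. If density fails to be uniform in this sense, we obtain a denser subgraph and iterate; since the density is at most $1$, this stops after boundedly many steps.

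\textbf{Step 2 (connecting bags).} Each $A_i$ has $t=o(\log n)$ vertices. I need vertex-disjoint connecting trees inside $W$, with total size $o(n)$, one tree for each of $k$ bags, that is, $kt\approx n$ terminals in total. Here I would use the connectivity $\kappa\ge n\log\log\log n/\log\log n$. A random $W$ of size $\eta n$ inherits, with high probability, the property that each vertex has $\Omega(\eta\kappa)$ neighbours in $W$. By a greedy or Menger-type argument, two vertices of a bag can then be joined through $W$ by paths of length about $n/\kappa=O(\log\log n/\log\log\log n)$. The total number of reservoir vertices used is about
\[
kt\cdot\frac{n}{\kappa}\;\approx\; n\cdot\frac{\log\log n}{\log\log\log n}\cdot\frac{1}{t}=o(n),
\]
because $t\gg\log\log n$. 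This works only because of the specific connectivity threshold in the hypothesis, and it is how that threshold enters the proof.

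\textbf{Main obstacle.} I expect Step 2 to be the hardest part: we need $\Theta(n/\log n)$-ish disjoint short connecting structures simultaneously, while $W$ is only $\eta n$ vertices. Greedy routing will deplete the neighbourhoods. I would try routing in rounds with fresh random sub-reservoirs $W_1,\dots,W_r$, using in each round a linkage theorem (a highly connected graph of connectivity $\ge 10\cdot(\text{number of pairs})$ is linked) applied to the graph restricted to the current reservoir. I would control losses by showing that each round consumes only a small fraction of the neighbourhood of each vertex. Once Steps 1 and 2 are done, the contracted bags form a clique minor of order $(1-\varepsilon)n\sqrt{\log(1/q^*)/\log n}$, as required.
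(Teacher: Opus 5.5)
The paper does not prove this statement. It quotes it from Thomason's paper as a black box, and its own arguments actually use the Myers--Thomason extension, \cref{thm:generalconnected}. Judged on its own, your outline has genuine gaps in both steps.

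\textbf{Step 1.} The estimate $\Pr[e(A_i,A_j)=0]\le (q^*)^{(1-o(1))t^2}$ does not follow from density, and convexity gives the opposite inequality. Let $\overline{N}(A)$ denote the set of vertices with no neighbour in $A$. For uniformly random $t$-sets, up to negligible collision terms, $\Pr[e(A,B)=0]=\mathbb{E}_A\bigl[(|\overline{N}(A)|/n)^t\bigr]$. Applying Jensen twice gives $\Pr[e(A,B)=0]\gtrsim q^{t^2}$, so the random graph is the \emph{best} case.

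Here is a graph meeting all hypotheses on which uniform bags fail badly. Take a clique $K$ on $\sqrt{p}\,n$ vertices and an independent set $R$, and join each vertex of $R$ to $\kappa$ vertices of $K$. This graph has density $p+o(1)$ and connectivity $\kappa$. Yet two random $t$-sets both lie in $R$ with probability about $(1-\sqrt p)^{2t}=n^{-o(1)}$, which gives $k^2n^{-o(1)}\gg k$ bad pairs.

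The density-increment patch does not repair this as written. An induced subgraph on $m\ge n^{1-o(1)}$ vertices cannot host $(1-\varepsilon)n/t'$ disjoint $t$-sets. You would need a potential such as $m\sqrt{\log(1/q^*_{G'})/\log m}$ that never drops, and a definite gain per step to justify termination. Neither is provided, and nothing shows that a failure of the estimate yields such a gain. Handling unevenly spread density while keeping essentially all $n$ vertices in play is exactly what the theorem must achieve.

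Your parameters are also off even in $G(n,p)$. With $t=(1-\varepsilon/3)\sqrt{\log n/\log(1/q^*)}$, the expected number of bad pairs is $k^2n^{-(1-\varepsilon/3)^2}=k\cdot n^{1-(1-\varepsilon/3)^2-o(1)}\gg k$, not $o(k)$. The bags need size at least $\sqrt{\log n/\log(1/q^*)}$, and the factor $1-\varepsilon$ must come from leaving about $\varepsilon n$ vertices outside the bags.

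\textbf{Step 2.} The displayed count is off by a factor $t$. In fact $kt\cdot n/\kappa\approx n\cdot\frac{\log\log n}{\log\log\log n}$, which exceeds $n$; your right-hand side equals $k\cdot n/\kappa$, the cost of one connector of size $n/\kappa$ per bag. So joining bag vertices by individual paths through $W$ is impossible.

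Since the bags occupy about $(1-\varepsilon)n$ vertices, each bag can afford only $O(\varepsilon t)$ extra vertices. You would therefore need, simultaneously for about $n/t$ bags, vertex-disjoint sets $C_i$ of size $o(t)$ with $G[A_i\cup C_i]$ connected, compatible with the random-like choice of the $A_i$. The outline contains no construction of these.

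Linkage in rounds cannot supply them either. Linkage theorems guarantee linkedness only for about (connectivity)$/10$ pairs. The hypothesis guarantees only connectivity $\kappa=o(n)$ for $G$, and in the worst case (e.g.\ the example above) about $\kappa|W_j|/n$ for a reservoir $W_j$. Over reservoirs of total size $\eta n$ you therefore link $O(\kappa)=o(n)$ pairs, whereas $\Theta(n)$ connections are needed.
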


Each of Theorems~\ref{Thomason-1}, \ref{BCE}, \ref{Thomason-2} have been generalized to arbitrary minors $F$ by Myers and Thomason~\cite{MyTh}, who in particular determined $c(F)$, where 
$$c(F):= \inf \{c: |E(G)|\geq c|V(G)| \Rightarrow G \mbox{ has an } \mbox  {$F$-minor}\}.$$
To state these more general results, we need to introduce a parameter $\gamma(F)\in [0,1]$ associated with any graph which was defined by Myers and Thomason~\cite{MyTh} as the optimal value of an optimization problem over vertex-weightings as follows.

\begin{definition}

Let $F$ be a graph on $k$ vertices. Then we define

$$\gamma(F)=\min\left\{\frac{1}{k}\sum_{v\in V(F)}w(v)\bigg\vert w:V(F)\rightarrow [0,\infty) \text{ and }\sum_{uv\in E(F)}k^{-w(u)w(v)}\le k\right\}.$$

\end{definition}

Myers and Thomason~\cite{MyTh}  determined $\gamma(H)$ precisely for several interesting graphs. In particular, they showed that $\gamma(K_k)=1$ for every $k$, gave an explicit formula for $\gamma(F)$ for every complete multi-partite graph $F$, showed that every graph $F$ on $k$ vertices with average degree $k^\tau$ satisfies $\gamma(F)\le \sqrt{\tau}$, and most interestingly that almost all graphs $F$ on $k$ vertices with this average degree satisfy $\gamma(F)=\sqrt{\tau}-o(1)$. 

With this notation at hand, we can now state the aforementioned generalizations of Theorems~\ref{Thomason-1}, \ref{BCE}, \ref{Thomason-2} to arbitrary minors.

\begin{theorem}[Theorem 2.2, Myers and Thomason \cite{MyTh}]\label{thm:generaldensity}
Let $F$ be a graph on $k$ vertices. Then %Then the extremal density $c(F)$ for $F$-minors, defined as the smallest value $c$ such that every non-null graph $G$ with $|E(G)|\ge c|V(G)|$ contains $F$ as a minor, satisfies
$$c(F)=(\gamma(F)\beta+o(1))k\sqrt{\log_2 k},$$
where the o(1)-term tends to 0 as $k\rightarrow \infty$. 
\end{theorem}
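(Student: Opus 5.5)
Since the statement is cited from Myers and Thomason~\cite{MyTh}, a self-contained proof would follow their scheme, which extends Thomason's argument for Theorem~\ref{Thomason-1} with a weighted random model. The proof has a lower bound and an upper bound, both matched to the constant $\gamma(F)\beta$.

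For the \emph{lower bound} I would fix an optimal weighting $w$ for $\gamma(F)$ and take $G=G(n,p)$ with $p$ constant. The density is tuned so that $n\approx \gamma(F)k\sqrt{\log_2 k}/\sqrt{\log(1/q)}$, up to the optimization over $p$ that produces $\beta$. I would then show that whp $G$ has no $F$-minor. Any $F$-minor gives disjoint branch sets $B_v$ such that $B_u$ and $B_v$ are adjacent whenever $uv\in E(F)$. Two disjoint sets of sizes $a,b$ are non-adjacent with probability $q^{ab}$. I would take a union bound over the size profiles $(|B_v|)$ and the placements, and write $|B_v|$ as $w'(v)\sqrt{\log k/\log(1/q)}$. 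The probability that every required pair is adjacent is then $\prod_{uv\in E(F)}(1-k^{-w'(u)w'(v)})\le\exp\bigl(-\sum_{uv} k^{-w'(u)w'(v)}\bigr)$. Minimality of $\gamma(F)$ means that any profile fitting into $n$ vertices has $\sum_{uv}k^{-w'(u)w'(v)}$ substantially larger than $k$, up to $1+\epsilon$ slack. This beats the number of placements, which is $\exp(O(k\log k))$. Optimizing $p$ exactly as in Thomason's computation then gives density $(\gamma(F)\beta-o(1))k\sqrt{\log_2 k}$.

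For the \emph{upper bound} I would first reduce, via Mader-type arguments, to a minor-minimal graph $G$ with $|E(G)|\ge c|V(G)|$. Such a $G$ is small, with $|V(G)|=O(k\sqrt{\log k})$, and has high connectivity, as in Thomason's Theorem~\ref{Thomason-2} setting. On this dense, highly connected graph I would embed $F$ with branch sets of sizes $\approx w(v)\sqrt{\log k/\log(1/q)}$, chosen at random. The constraint $\sum_{uv} k^{-w(u)w(v)}\le k$ says that the expected number of unrealized edges of $F$ is at most about $k$, which is $o$-small relative to the budget. Those missing adjacencies would then be repaired by connecting short paths through a reserved set, using the connectivity. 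This is the generalization of the proof of Theorem~\ref{Thomason-2}.

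The main obstacle is the upper bound for graphs that are not pseudorandom. A dense minor-minimal $G$ need not resemble $G(n,p)$, so one needs Thomason's argument that the random graph is extremal. Concretely, one must show that random branch sets in any graph of the given density and connectivity realize at least as many edges as in $G(n,p)$. I would approach this via a convexity/Jensen argument on the number of edges between random sets, and then handle the low-density pieces by passing to a denser minor.
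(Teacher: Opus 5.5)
The paper does not prove this statement. It quotes it from Myers and Thomason and uses it as a black box, so your sketch has to stand on its own.

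\textbf{Lower bound.} This is the right first-moment argument, but two details need repair. The number of placements of branch sets is up to $(k+1)^n=\exp(O(k\log^{3/2}k))$, not $\exp(O(k\log k))$. Also, the definition of $\gamma(F)$ as a minimum only gives $\sum_{uv\in E(F)}k^{-w'(u)w'(v)}>k$, which does not beat that count. Both are fixed by a shift. If $\sum_{uv}k^{-w'(u)w'(v)}\le k^{1+\delta}$, then $w'+\sqrt{\delta}$ is feasible, because $(w'(u)+\sqrt{\delta})(w'(v)+\sqrt{\delta})\ge w'(u)w'(v)+\delta$. Hence the average of $w'$ is at least $\gamma(F)-\sqrt{\delta}$. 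So for $n\le(\gamma(F)-2\sqrt{\delta})k\sqrt{\log_{1/q}k}$ every size profile has $\sum\ge k^{1+\delta}$, and $(k+1)^n e^{-k^{1+\delta}}\to 0$.

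\textbf{Upper bound, first gap: the reduction.} Minor-minimality with $|E(G)|\ge c|V(G)|$ gives $\delta(G)\ge c$ and more than $c-1$ triangles on every edge. It gives neither small order nor connectivity of order $|V(G)|\log\log\log|V(G)|/\log\log|V(G)|$. For instance, with $c=3-6/N$, every planar triangulation on $N$ vertices is minor-minimal.

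The usual shortcut is to pass to $G[N(v)]$ for a minimum-degree vertex $v$. That graph has order at most $2c$ and minimum degree $c$, but its edge-to-vertex ratio is only guaranteed to be about $c/2$. Feeding it into Theorem~\ref{thm:generalconnected} yields only $c(F)\le(2\gamma(F)\beta+o(1))k\sqrt{\log_2 k}$.

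The sharp constant needs a small, highly connected minor $H$ that keeps ratio $(1-o(1))c$. That is exactly where $\beta$ enters, via $|V(H)|\sqrt{\log_2(1/(1-p_H))}\ge 2c\sqrt{\log_2(1/(1-p_H))}/p_H\ge c/\beta$. Producing such an $H$ is a nontrivial lemma in Thomason's work, and you do not supply it.

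\textbf{Upper bound, second and more serious gap: the crux step.} The step you single out as the crux is false as stated, and Jensen points the wrong way. For a uniformly random $s$-set $B$, $\mathbb{E}|V\setminus N(B)|\approx\sum_x(1-d(x)/n)^s\ge nq^s$ by convexity. So degree irregularity makes non-adjacency of random branch sets more likely, not less.

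Concretely, take a clique on $\alpha n$ vertices completely joined to an independent set of $(1-\alpha)n$ vertices. It has density $p=1-(1-\alpha)^2$ and connectivity $\alpha n$. Yet two random disjoint $s$-sets are non-adjacent with probability about $(1-\alpha)^{2s}=q^{s}$, against $q^{s^2}$ in $G(n,p)$. With $s\approx\sqrt{\log_{1/q}n}$ this is an $n^{-o(1)}$ fraction of all pairs, far more than any reserve set can repair.

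A correct argument must exploit irregularity rather than average over it; in this example one should simply use the clique. Doing that in general is the substance of Theorem~\ref{Thomason-2} and its weighted version Theorem~\ref{thm:generalconnected}. Your closing remark about passing to a denser minor points in the right direction but is not yet an argument. As it stands, the upper bound is unproved.
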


\begin{theorem}[Theorem 2.3, Myers and Thomason\cite{MyTh}]\label{thm:generalrandom}
For every $\varepsilon>0$ there exists $k_0=k_0(\varepsilon)$ such that for every $k\ge k_0$, every graph $F$ with $k$ vertices satisfying $\gamma(F)\ge \varepsilon$, every $p\in [\varepsilon, 1-\varepsilon]$, and for every number $n\le \gamma(F)k\sqrt{\log_{1/(1-p)}(k)}$, we have that $F$ is a minor of $G(n,p-\varepsilon)$ with probability less than $\varepsilon$.  
\end{theorem}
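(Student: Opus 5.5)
The plan is a first-moment argument over all candidate minor models of $F$ in $G=G(n,p')$, where $p'=p-\varepsilon$ and $q'=1-p'=q+\varepsilon$ with $q=1-p$. An $F$-minor in $G$ is given by pairwise disjoint nonempty branch sets $V_v\subseteq V(G)$, $v\in V(F)$, such that for every $uv\in E(F)$ there is an edge of $G$ between $V_u$ and $V_v$. I will drop the requirement that each $V_v$ is connected; this can only increase the count, so it still gives an upper bound.

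\textbf{Counting and probabilities.} Each such model is a partial map from $[n]$ to $V(F)$, so there are at most $(k+1)^n$ of them. Fix a model with sizes $a_v=|V_v|$. Then the edge sets between distinct pairs of branch sets are disjoint, hence independent, and
\[
\Pr[\text{model is realised}]=\prod_{uv\in E(F)}\bigl(1-q'^{\,a_ua_v}\bigr)\le \exp\Bigl(-\sum_{uv\in E(F)} q'^{\,a_ua_v}\Bigr).
\]
Put $t=\sqrt{\log_{1/q'}k}$ and $w(v)=a_v/t$. Then $q'^{\,a_ua_v}=k^{-w(u)w(v)}$, and
\[
\sum_v w(v)\le \frac{n}{t}\le \gamma(F)\,k\sqrt{\frac{\log(1/q')}{\log(1/q)}}\le (1-\delta)\gamma(F)k,
\]
where $\delta=\delta(\varepsilon)>0$ is obtained from $q'=q+\varepsilon$ and $p\in[\varepsilon,1-\varepsilon]$. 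The expected number of models is therefore at most
\[
\exp\Bigl(n\log(k+1)-\min_{w}\sum_{uv\in E(F)}k^{-w(u)w(v)}\Bigr),
\]
with the minimum over all weightings $w$ satisfying $\sum_v w(v)\le(1-\delta)\gamma(F)k$. Since $n\log(k+1)\le k^{1+o(1)}$, it suffices to prove the following.

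\textbf{Key step (main obstacle).} I need a quantitative stability statement for $\gamma(F)$: for each $\delta>0$ there is $c=c(\delta,\varepsilon)>0$ such that, for $k$ large and $\gamma(F)\ge\varepsilon$, every weighting $w$ with $\sum_v w(v)\le(1-\delta)\gamma(F)k$ satisfies
\[
\sum_{uv\in E(F)}k^{-w(u)w(v)}\ge k^{1+c}.
\]
The definition of $\gamma(F)$ only gives $\sum_{uv}k^{-w(u)w(v)}>k$, so more is required.

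I would argue by contradiction. Suppose $S:=\sum_{uv}k^{-w(u)w(v)}\le k^{1+c}$, and rescale to $w'=(1+\eta)w$ with $(1+\eta)(1-\delta)\le 1$, so that $\sum_v w'(v)\le\gamma(F)k$. Each term becomes $x_e^{(1+\eta)^2}$, where $x_e=k^{-w(u)w(v)}$. I split the edges according to $x_e$:
\begin{itemize}
\item Edges with $x_e\le k^{-\theta}$ each shrink by an extra factor $k^{-\theta(2\eta+\eta^2)}$. Choosing $c<\theta(2\eta+\eta^2)$, their total after rescaling is at most $k^{1+c-\theta(2\eta+\eta^2)}=o(k)$.
\item Edges with $x_e>k^{-\theta}$ have $w(u)w(v)<\theta$, so at least one endpoint has weight below $\sqrt\theta$. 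I want to show there are at most $k/2$ such edges. First try: raise all low-weight vertices to weight $\sqrt\theta$. This costs at most $k\sqrt\theta\ll\delta\varepsilon k\le\delta\gamma(F)k$ of the total weight budget, and it is here that $\gamma(F)\ge\varepsilon$ is used. This should reduce the count of such edges, perhaps after iterating the split on a dyadic scale of values of $x_e$.
\end{itemize}
Combining the two cases gives a weighting with $\sum_v w'(v)<\gamma(F)k$ and $\sum_{uv}k^{-w'(u)w'(v)}\le k$, contradicting the minimality in the definition of $\gamma(F)$. Balancing $\theta,\eta,c$ against $\delta$ and $\varepsilon$ is where I expect the real difficulty.

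\textbf{Conclusion.} Granting the key step, the expected number of models is at most $\exp\bigl(k^{1+o(1)}-k^{1+c}\bigr)\to 0$. By Markov's inequality, $F$ is a minor of $G(n,p-\varepsilon)$ with probability less than $\varepsilon$ once $k\ge k_0(\varepsilon)$. Integrality of the $a_v$ and the case of empty branch sets need no extra care, since the bound above holds for every real weighting.
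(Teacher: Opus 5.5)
\textbf{The paper gives no proof here.} This statement is quoted from Myers and Thomason and used only as a black box in the lower bound of Theorem~\ref{main}, so your argument has to stand on its own.

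\textbf{What already works.} The first-moment framework is sound:
\begin{itemize}
\item the union bound over not-necessarily-connected models;
\item independence across the edges of $F$;
\item the substitution $q'^{\,a_ua_v}=k^{-w(u)w(v)}$ with $t=\sqrt{\log_{1/q'}k}$;
\item the existence of $\delta(\varepsilon)$, by continuity of $q\mapsto \ln(1/(q+\varepsilon))/\ln(1/q)$ on $[\varepsilon,1-\varepsilon]$;
\item the estimate $n\ln(k+1)=O_\varepsilon(k\log^{3/2}k)$, uniformly in $F$ and $p$.
\end{itemize}
The endpoint $p=\varepsilon$, where $G(n,0)$ is empty, is trivial, since $\gamma(F)>0$ forces $E(F)\neq\emptyset$.

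\textbf{The gap.} The missing piece is the key step, which you leave open. The sub-goal you set there (at most $k/2$ edges with $x_e>k^{-\theta}$, perhaps via a dyadic iteration) is not what is needed. Your own raising move closes the argument immediately if you apply it to all vertices before rescaling.

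Put $\tilde w=\max\{w,\sqrt\theta\}$. Then $\tilde w(u)\tilde w(v)\ge\max\{w(u)w(v),\theta\}$ for every edge, so each term $k^{-\tilde w(u)\tilde w(v)}$ is at most $\min\{x_e,k^{-\theta}\}$. Thus no term increases, and no edge remains in your second class. Now set $w'=(1+\eta)\tilde w$; each term is then multiplied by at most $k^{-\theta(2\eta+\eta^2)}$.

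Concretely, take $\sqrt\theta=\delta\varepsilon/2$, $\eta=\delta/2$ and $c=\delta\theta$. If $\sum_e x_e<k^{1+c}$, then using $\varepsilon\le\gamma(F)$,
\[
\sum_v w'(v)\le\bigl(1+\tfrac{\delta}{2}\bigr)\Bigl((1-\delta)\gamma(F)k+\tfrac{\delta\varepsilon}{2}k\Bigr)\le\bigl(1-\tfrac{\delta^2}{4}\bigr)\gamma(F)k<\gamma(F)k,
\qquad
\sum_{uv\in E(F)}k^{-w'(u)w'(v)}<k^{1+c-\theta(\delta+\delta^2/4)}=k^{1-\theta\delta^2/4}<k .
\]
This contradicts the minimality in the definition of $\gamma(F)$. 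So the key step holds with $c=\delta^3\varepsilon^2/4$ for every $k\ge 2$, and no largeness of $k$ or balancing difficulty is involved.

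With $c$ fixed in this way, your conclusion follows: $\Pr[F \text{ is a minor}]\le (k+1)^n e^{-k^{1+c}}$, which is below $\varepsilon$ once $k\ge k_0(\varepsilon)$. With this paragraph inserted in place of your sketch, the proof is complete.
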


\begin{theorem}[Theorem 2.5, Myers and Thomason \cite{MyTh}]\label{thm:generalconnected}
For every $\varepsilon>0$ there exists $k_0=k_0(\varepsilon)$ such that for every $k\ge k_0$, every graph $F$ with $k$ vertices satisfying $\gamma(F)\ge \varepsilon$, every $p\in [\varepsilon, 1-\varepsilon]$, and for every number $n\ge \gamma(F)k\sqrt{\log_{1/(1-p)}(k)}$, the following holds.
If $G$ is a graph on $n$ vertices, with density $p+\varepsilon$ and connectivity at least $n(\log \log \log n)/(\log \log n)$, then $F$ is a minor of $G$.
\end{theorem}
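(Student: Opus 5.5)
Since the statement is the Myers--Thomason extension of \cref{Thomason-2}, I would follow Thomason's random-branch-set strategy, with the weighting $w$ from the definition of $\gamma(F)$ setting the relative branch-set sizes. Fix $\varepsilon$ and take an optimal weighting $w$, so that $\sum_v w(v)=\gamma(F)k$ and $\sum_{uv\in E(F)}k^{-w(u)w(v)}\le k$. Put $q=1-p$ and $L=\sqrt{\log_{1/q}k}$. Set aside a reserve $Z\subseteq V(G)$ of size $\eta n$ for a small $\eta=\eta(\varepsilon)$. From the rest, choose disjoint random sets $X_v$, $v\in V(F)$, with $|X_v|\approx(1-\eta)w(v)L$; this uses at most $(1-\eta)n$ vertices by the lower bound on $n$. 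The goal is that, after repairs, the sets $X_v$ plus reserve vertices are connected branch sets with an edge between $X_u$ and $X_v$ for every $uv\in E(F)$.

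\emph{Step 1: cross edges.} I would show that for $uv\in E(F)$,
\[
\Pr\bigl[e_G(X_u,X_v)=0\bigr]\lesssim q'^{\,|X_u||X_v|},
\]
where $q'=1-p-\varepsilon/2<q$. Since $q'<q$, the right-hand side is $k^{-(1+\delta)w(u)w(v)}$ for some $\delta=\delta(\varepsilon)>0$. The constraint on $w$ then makes the expected number of unrealised edges of $F$ at most about $k^{1-\delta'}$, which is $o(k)$ and in fact much smaller. The difficulty is that $G$ is arbitrary, not random. I would first clean $G$: delete vertices of low degree, and if necessary pass to a part where most vertices have degree $(p+\varepsilon/2\pm o(1))n$. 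The high-density assumption together with connectivity $n(\log\log\log n)/\log\log n$ prevents the clean-up from destroying too much. Then, exposing $X_u$ first, the number of vertices outside $N(X_u)$ is concentrated. A convexity (Jensen/Kővári--Sós--Turán-type) counting argument bounds the number of pairs of $a$-sets and $b$-sets with no edges between them by roughly $\binom{n}{a}\binom{n}{b}q'^{ab}$. Sets $X_v$ with $w(v)$ very small, i.e.\ tiny branch sets, need separate care; I would let each such set take a few extra vertices, which the $\eta$-slack pays for.

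\emph{Step 2: repair and connect.} The few unrealised edges $uv$ can be fixed with reserve vertices: a vertex of $Z$ adjacent to both $X_u$ and $X_v$, attached to one of them, typically exists by density. Each $X_v$ must also become connected. Here I would use the connectivity hypothesis via Menger, or a linkage-type argument, to join the vertices of $X_v$ by short paths through $Z$. Paths for different $v$ must be disjoint, and the total number of reserve vertices used must stay below $\eta n$. I expect this connection step, together with making Step 1 work in a non-random graph, to be the main obstacle. The connectivity bound only barely exceeds what is needed, so the path system must be built greedily with careful accounting. My first attempt: show that any two vertices have $\Omega(n)$ common neighbours or short connections after clean-up, and connect each $X_v$ as a star through $Z$ using $O(|X_v|)$ reserve vertices. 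That uses about $\sum_v|X_v|\ll\eta n$ if $L$ is small relative to $n/k$; otherwise I would fall back on Thomason's original iterative connectivity lemma.

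Finally, a first-moment/Markov argument shows that with positive probability all estimates hold simultaneously. This gives the $F$-minor and proves the statement.
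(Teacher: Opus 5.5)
The paper gives no proof of this statement. It is quoted from Myers and Thomason and used as a black box in the proof of Theorem~\ref{main}. So your sketch has to stand on its own, and it does not.

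\textbf{Step~1 is false for graphs meeting the hypotheses.} Take $G=K_{n/2,n/2}$. Its density is about $1/2$ (so $p\approx 1/2-\varepsilon$), its connectivity is $n/2$, and it is regular, so your degree clean-up removes nothing. Two random sets lying in the same side span no edges, hence $\Pr[e_G(X_u,X_v)=0]\ge 2^{-|X_u|-|X_v|}$. For $F=K_k$ with $|X_u|=|X_v|\approx L=\Theta(\sqrt{\log k})$, this is $k^{-o(1)}$ rather than $k^{-1-\delta}$. So about $k^{2-o(1)}$ edges of $F$ go unrealised, far more than one reserve vertex per edge can repair.

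The tool you name cannot give the bound you need. Jensen/K\H{o}v\'ari--S\'os--Tur\'an (Sidorenko's inequality for $K_{a,b}$ in $\overline{G}$) yields a \emph{lower} bound of roughly $\binom{n}{a}\binom{n}{b}q^{ab}$ on the number of edgeless pairs. The random graph is essentially the best case for this count, not the worst. Likewise, concentration of $|V\setminus N(X_u)|$ for typical $X_u$ does not help. You need control of events of probability $q^{\Theta(L^2)}$, while in $K_{n/2,n/2}$ the bad event $X_u\subseteq A$ has probability $2^{1-|X_u|}$.

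Some further idea is needed here; one option is to let the first moment do the work. After a degree regularisation that respects the size--density trade-off, choose each branch set uniformly among the ``good'' sets $X$, those with $|V\setminus N(X)|\le 2\,\mathbb{E}|V\setminus N(X)|$. By Markov these are at least half of all sets, and this gives $\Pr[X_v\subseteq V\setminus N(X_u)]\le 2(2q'^{|X_u|})^{|X_v|}$.

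\textbf{The connection step is left open.} In the critical regime $n\approx\gamma(F)kL$ the branch sets occupy $\sum_v|X_v|\approx(1-\eta)n$ vertices. Joining each $X_v$ with $O(|X_v|)$ reserve vertices therefore costs $\Theta(n)$, far more than the $\eta n$ you set aside. Your budget closes only when $n\gg kL$, which is the easy case. You need $O(1)$, or at least $o(L)$, extra vertices per branch set on average.

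The mechanism you propose, $\Omega(n)$ common neighbours for any two vertices, is unavailable in general. In a balanced blow-up of $C_5$ (density about $2/5$, connectivity $2n/5$), two vertices in consecutive parts have no common neighbour, so a random $X_v$ has none either. Appealing to ``Thomason's iterative connectivity lemma'' at this point defers exactly the part of the argument that uses $\kappa(G)\ge n\log\log\log n/\log\log n$.

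\textbf{A smaller inaccuracy.} The weight constraint gives only $\sum_{uv\in E(F)}k^{-(1+\delta)w(u)w(v)}\le k$, not $k^{1-\delta'}$. The gain $k^{-\delta w(u)w(v)}$ is negligible on edges with $w(u)w(v)=O(1/\log k)$. So up to $\Theta(k)$ edges may need repair, which your reserve can afford only if each repair is cheap.
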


Kostochka \cite{K}, Stiebitz \cite{St}, as well as Reed and Thomas \cite{RT}  studied the Hadwiger number for graphs and their complements in  an ``off-diagonal'' setting when $h(G)$ is a fixed constant. 
For the rest of this note, the logarithms are base $2$, i.e., $\log = \log_2$ and $\beta$ is a constant defined above. We drop floors and ceilings where clear from context.   

 Let $R_h(k; \ell)$ be the smallest integer $n$ such that any edge coloring of $K_n$ in $\ell$ colors results in a monochromatic graph with Hadwiger number $k$ (we also abbreviate $R_h(k):=R_h(k;2)$). More generally, for a graph $F$ let us denote by $R_h(F;\ell)$ the smallest $n$ such that every $\ell$-edge coloring of $K_n$ results in a monochromatic $F$-minor. The following theorems leverage the previously stated results of Myers and Thomason to asymptotically determine $R_h(F;2)$ up to lower-order terms for all graphs $F$ and in particular $R_h(k)$, and also to asymptotically determine $R_h(k;\ell)$ when $\ell$ is much larger than $k$. 

   \begin{theorem} \label{main}
For every graph $F$ on $k$ vertices we have $$R_h(F;2)=(\gamma(F)+o(1))k\sqrt{\log k}$$ where the $o(1)$-term tends to zero as $k\rightarrow \infty$. 
In particular, we have  $$R_h(k)=(1+o(1))k \sqrt{\log k}.$$ 
Moreover,  $R_h(3)=5$, $R_h(4)=7$, and for any $k\geq 5$, 
$2k-2 \leq R_h(k) \leq  32(k-2)\lfloor \log(k-2) \rfloor +1.$
  \end{theorem}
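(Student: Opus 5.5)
For the asymptotic statement I would prove matching lower and upper bounds separately, using Theorems~\ref{thm:generalrandom} and~\ref{thm:generalconnected} with $p=1/2$, where $\log_{1/(1-p)}k=\log k$.

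\emph{Lower bound.} Fix $\varepsilon>0$ and take $k$ large. If $\gamma(F)<\varepsilon$ there is nothing to prove, since trivially $R_h(F;2)\ge 0$. Otherwise, let $n=(1-\varepsilon)\gamma(F)k\sqrt{\log k}$. I would colour $K_n$ randomly, making each edge red with probability $1/2$. Then both the red and the blue graph are distributed as $G(n,1/2)$. I would apply Theorem~\ref{thm:generalrandom} with a slightly perturbed edge probability $1/2+\varepsilon'$, chosen so that $n\le\gamma(F)k\sqrt{\log_{1/(1/2-\varepsilon')}k}$. Since having $F$ as a minor is monotone, $G(n,1/2)$ then contains $F$ as a minor with probability less than $\varepsilon'$. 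By a union bound over the two colours, some colouring has no monochromatic $F$-minor, so $R_h(F;2)\ge(\gamma(F)-o(1))k\sqrt{\log k}$.

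\emph{Upper bound.} Let $n=(1+\varepsilon)\gamma(F)k\sqrt{\log k}$, or $n=\varepsilon k\sqrt{\log k}$ if $\gamma(F)$ is tiny, in which case I would compare with a dense graph via monotonicity in $F$. Take any $2$-colouring of $K_n$. The main obstacle is that Theorem~\ref{thm:generalconnected} needs high connectivity, namely $\kappa\ge n\log\log\log n/\log\log n=o(n)$, and density about $1/2$.

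My plan is a connectivity dichotomy. If the red graph $R$ has a separator $S$ of size $o(n)$, then $R-S$ splits into parts $A,B$ with no red edges between them. So the blue graph contains a complete bipartite graph between $A$ and $B$. If both $|A|,|B|\ge\delta n$, then $K_{\delta n,\delta n}$ contains a $K_{\delta n}$-minor: contract a perfect matching of pairs across the sides. This already gives an $F$-minor with linearly many vertices to spare. Otherwise, every small separator cuts off only a tiny piece. I would iteratively delete such pieces, or pass to a subgraph via the standard Mader/Thomason density-to-connectivity reduction, to find a subgraph on $(1-o(1))n$ vertices in which some colour has density at least $1/2-o(1)$ and connectivity at least the required bound. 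Then Theorem~\ref{thm:generalconnected} with $p=1/2-2\varepsilon$ and $\varepsilon$-slack finishes the argument. The delicate step is making both colours' connectivity requirements compatible while losing only $o(n)$ vertices. For this I would use that the colour of larger density, at least $1/2$, is the one we keep, and argue that a small cut in it forces a large complete bipartite graph in the other colour.

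\emph{Small cases and explicit bounds.} For $R_h(3)=5$: $K_4$ splits into two paths $P_4$, which are acyclic, while any $2$-colouring of $K_5$ has a monochromatic cycle because $10>2\cdot 4$. For $R_h(4)=7$: $K_6$ decomposes into two graphs with no $K_4$-minor, e.g.\ series-parallel graphs, while on $7$ vertices one colour has at least $11>2\cdot7-3$ edges, which forces a $K_4$-minor. The lower bound $2k-2$ comes from colouring $K_{2k-3}$ as two red cliques of sizes $k-1$ and $k-2$ with blue between them. The red graph has Hadwiger number $k-1$, and the blue graph $K_{k-1,k-2}$ has a $K_k$-minor only if $2(k-2)\ge \ldots$ fails, which needs a short check that $h(K_{a,b})\le\lfloor(a+b)/2\rfloor+1$. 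The upper bound $32(k-2)\lfloor\log(k-2)\rfloor+1$ would follow from an explicit Mader-type bound $c(k)\le 8(k-2)\lfloor\log(k-2)\rfloor$ applied to the denser colour, which has at least $(n-1)n/4$ edges.
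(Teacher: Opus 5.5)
Your lower bound, the bounds $2k-2$ and $32(k-2)\lfloor\log(k-2)\rfloor+1$, and $R_h(3)=5$ are fine and match the paper. There are two genuine gaps, plus one unjustified step.

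\textbf{The asymptotic upper bound.} It rests on the claim that after cutting away pieces separated by small red separators you keep a highly connected subgraph on $(1-o(1))n$ vertices of density at least $1/2-o(1)$ in some colour. This is false. Let red be a clique on a set $Q$ of $0.8n$ vertices, plus about $\sqrt n$ random red edges from each vertex outside $Q$ into $Q$, and let blue be the rest. A small red separator with two linear sides would have to contain almost all of $Q$, so your first case does not occur. Yet on any $(1-o(1))n$ vertices, red has connectivity $O(\sqrt n)$ and blue has density about $0.36$. Your deletion ends at $Q$, having removed $0.2n$ vertices.

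What deletion really controls is the number of lost \emph{edges}. The same holds for the paper's alternative of passing to the $\lceil\varepsilon' n\rceil$-core, whose minimum degree forces both sides of any small cut to have at least $\varepsilon' n-|S|\ge k$ vertices. A deleted piece $A_i$ with separator $S_i$ takes at most $|A_i|(\delta n+|S_i|)$ edges with it, so the final graph $H$ keeps $(1/4-O(\delta))n^2$ edges. But $|V(H)|=\alpha n$ can be as small as about $n/\sqrt2$. If you then only use density $\ge 1/2-O(\delta)$, you lose a factor $\sqrt2$ in Theorem~\ref{thm:generalconnected}.

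The missing idea is the size--density trade-off. Since the edges are retained, $H$ has density $p\ge(1/2-O(\delta))/\alpha^2$. The function $y\mapsto\log_2(1/(1-y))/(2y)$ is increasing and equals $1$ at $y=1/2$, so $\alpha^2\log_2(1/(1-p))\ge 1-O(\delta)$. Equivalently, $|V(H)|\sqrt{\log_2(1/(1-p))}\ge(1-O(\delta))n$, which is exactly what Theorem~\ref{thm:generalconnected} needs when $n=(1+\varepsilon)\gamma(F)k\sqrt{\log k}$. This is the computation in Case~1 of the paper; without it, your final application of Theorem~\ref{thm:generalconnected} is unjustified.

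\textbf{The case $R_h(4)\le 7$.} Your count is off by one. The majority colour on $K_7$ has at least $11$ edges, but $2\cdot 7-3=11$, so ``$11>2\cdot7-3$'' is false. Moreover, $K_4$-minor-free graphs with $7$ vertices and $11$ edges exist: every $2$-tree, e.g.\ $K_{1,1,5}$. Counting only yields $R_h(4)\le 8$. To get $7$ you must exclude a splitting of $K_7$ into $K_4$-minor-free graphs with $11$ and $10$ edges. The paper does this structurally: both colour classes are $3$-chromatic by the case $k=4$ of Hadwiger's conjecture, with colour classes of size at most $3$, followed by a case analysis.

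For $R_h(4)>6$ you should also exhibit the splitting of $K_6$. For example, take $C_6$ plus the triangle on alternate vertices versus its complement; each has only three vertices of degree at least $3$, hence no $K_4$-minor.

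\textbf{The case of tiny $\gamma(F)$.} Your appeal to ``monotonicity in $F$'' needs a supergraph $F'\supseteq F$ on $k$ vertices with $\gamma(F')$ just above $\varepsilon$, which you do not justify. It is simpler, as in the paper, to apply Theorem~\ref{thm:generaldensity} directly. The majority colour has $|E|/|V|\ge(n-1)/4\ge c(F)=(\beta\gamma(F)+o(1))k\sqrt{\log k}$ when $\gamma(F)\le\varepsilon/2$.
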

  
\begin{theorem}\label{multicolor}
Let $\beta = 0.2656568...$ as above. For any $\ell\geq 1$,  $R_h(3; \ell) = 2\ell+1$.
 For any $\varepsilon>0$ there is $k_0=k_0(\varepsilon)$, such that for any $k\geq k_0$ there is $\ell_0=\ell_0(k, \varepsilon)$, so that for any $\ell>\ell_0$, 
$$ 2\beta \ell k \sqrt{\log k}  (1- \varepsilon)  \leq R_h(k; \ell) \leq  2\beta \ell k \sqrt{\log k} (1+\varepsilon).$$
\end{theorem}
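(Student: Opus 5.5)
We treat the three parts of the statement separately: the exact value $R_h(3;\ell)=2\ell+1$, then the upper bound for large $\ell$, then the lower bound for large $\ell$.

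\textbf{The case $k=3$.} A graph has a $K_3$-minor if and only if it contains a cycle. Hence a monochromatic $K_3$-minor is avoided exactly when every colour class is a forest.

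For the upper bound, $\ell$ forests on $n$ vertices contain at most $\ell(n-1)$ edges. If $n=2\ell+1$, then $\binom{n}{2}=\ell n>\ell(n-1)$, so some colour class contains a cycle.

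For the lower bound, $K_{2\ell}$ decomposes into $\ell$ Hamiltonian paths (the standard zigzag decomposition). Colouring each path with its own colour makes every colour class a forest, so $R_h(3;\ell)>2\ell$.

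\textbf{Upper bound for large $\ell$.} Let $n=2\beta\ell k\sqrt{\log k}\,(1+\varepsilon)$. In any $\ell$-colouring of $K_n$, some colour class $G$ has at least $\binom n2/\ell\approx n^2/(2\ell)$ edges. Then
$$|E(G)|/n\approx n/(2\ell)=\beta k\sqrt{\log k}\,(1+\varepsilon)>c(k)$$
once $k\ge k_0(\varepsilon)$, by Theorem~\ref{Thomason-1}. So $G$ has a $K_k$-minor. Lower-order terms (the $-n/(2\ell)$ in $\binom n2$) are absorbed by $\varepsilon$.

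\textbf{Lower bound for large $\ell$.} This is the main work. We need an $\ell$-colouring of $K_n$ with $n=2\beta\ell k\sqrt{\log k}\,(1-\varepsilon)$ in which no colour class has a $K_k$-minor. Each colour class then has average degree about $n/\ell=2\beta k\sqrt{\log k}(1-\varepsilon)$, which is just below the Thomason threshold $2c(k)$. So we must decompose, or nearly decompose, $K_n$ into $\ell$ graphs, each close to an extremal graph for Thomason's theorem.

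Thomason's extremal graphs are disjoint unions of copies of a random graph $G(m,p)$ with $p=1-\lambda$ and $m$ chosen so that $h(G(m,p))<k$ by Theorem~\ref{BCE}. These have density $c(k)(1-o(1))$ per vertex, and the optimisation producing $\beta$ is exactly over this choice of $p$.

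The plan has four steps.
\begin{enumerate}
\item Fix $p=1-\lambda$ and take $m=(1-\varepsilon/2)k\sqrt{\log k/\log(1/\lambda)}$. Then $G(m,p)$ has no $K_k$-minor with high probability, and a disjoint union of such pieces also has none.
\item Partition $V(K_n)$ into $N=n/m$ blocks of size $m$. Since $\ell$ is huge relative to $k$, $N$ is huge, so we can use a resolvable design or affine-plane-type structure. Use about $N/p$ ``parallel classes'', each partitioning the vertex set into groups of $m$ vertices, so that every pair of vertices is covered by about $1/p$ classes.
\item Within each class, colour each group with a random $p$-fraction of its pairs, arranging the random choices so that each pair of $K_n$ receives exactly one colour overall.
\item Count colours to get $\ell\approx \binom n2\big/\big(p\binom m2 N\big)\approx n/(pm)$, and check that $n/\ell\approx pm$ equals $2\beta k\sqrt{\log k}(1-\varepsilon)$ by the definition of $\beta$.
\end{enumerate}

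The obstacle is making step 3 rigorous. Once the pairs are split, each colour class is no longer a genuinely random $G(m,p)$ on each group, and we must still guarantee that no class has a $K_k$-minor. Two routes are available. One is a random greedy or nibble argument: assign colours randomly among the covering groups and show that each piece is still a subgraph of a $G(m,p')$-like graph that satisfies the minor-free property, via the first-moment bound underlying Theorem~\ref{BCE}, with a union bound over all groups using that $k$ is large. The other is to handle the uncoloured leftover pairs with a few extra colours, costing only $o(\ell)$, since each has sparse leftovers. The requirement $\ell>\ell_0(k,\varepsilon)$ is needed precisely for these design and leftover arguments.
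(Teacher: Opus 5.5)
Your arguments for $R_h(3;\ell)=2\ell+1$ and for the upper bound are correct and are the same as the paper's. The lower bound, however, has a genuine gap. Steps 2--3 are the entire construction, and you leave them as ``the obstacle'' with two routes you do not carry out; as described, they do not work.

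First, the index $1/p=1/(1-\lambda)\approx 1.4$ is not an integer, so no system of parallel classes covers every pair exactly $1/p$ times. Suppose you use a genuine $\mu$-fold resolvable design and send each pair to a uniformly random one of its $\mu$ blocks. Then each block does carry an exact $G(m,1/\mu)$, but $p\in\{1,\frac12,\frac13,\dots\}$. The best choice, $p=\frac12$, only gives $n/\ell\approx\frac12 k\sqrt{\log k}$ instead of $2\beta k\sqrt{\log k}\approx 0.531\,k\sqrt{\log k}$, so the constant is lost. With mixed coverage indices the blocks are no longer $G(m,p)$, and Theorem~\ref{BCE} does not apply.

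Second, a union bound over the groups fails. Their number grows with $\ell$, while the failure probability of a single group depends only on $k$, and $\ell\to\infty$ after $k$ is fixed. The local lemma could repair this point, but not the integrality issue. The leftover route is not quantified at all.

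The missing idea is that the colouring itself needs no randomness. Use Theorem~\ref{Thomason-1} once to fix a single graph $H'$ with no $K_k$-minor and $|E(H')|\ge(1-\varepsilon^2/5)\beta k\sqrt{\log k}\,|V(H')|$; one good outcome of your $G(m,p)$ would do. Attach a pendant edge to obtain $H$, so that the gcd of its degrees is $1$. Wilson's theorem then decomposes $K_n$ into copies of $H$ for every large $n$ with $|E(H)|$ dividing $\binom n2$, and for $\ell$ large such an $n$ exists in $[2\beta\ell k\sqrt{\log k}(1-\varepsilon),\,2\beta\ell k\sqrt{\log k}(1-\varepsilon/2)]$. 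By Alon and Yuster, the copies can be coloured with $(1+o(1))(n-1)|V(H)|/(2|E(H)|)\le\ell$ colours so that every colour class is a vertex-disjoint union of copies of $H$. Such a class has no $K_k$-minor because $K_k$ is connected. This is the near-resolvable design you were reaching for, but with a fixed minor-free block $H$ in place of randomly thinned copies of $K_m$, which removes both the integrality and the union-bound problems.
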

  
  \section{Proofs of the main results}
  
  \begin{proof}[Proof of Theorem \ref{main}]
  
 %First, we note that we can get an upper bound $R_h(k) \leq   4\beta k \sqrt{\log k} (1+o(1)) \approx 1.0623 k \sqrt{\log k} (1+o(1))$ from Theorem \ref{Thomason-1} immediately.  

We start by proving the first part of the theorem concerning the asymptotic behavior of $R_h(F;2)$, which then also implies the asymptotic behavior of $R_h(k)$, since $R_h(k)=R_h(K_k;2)$ and $\gamma(K_k)=1$. 

Formally, we need to prove that for every $\varepsilon>0$ and every sufficiently large number $k$ (in terms of $\varepsilon$), every graph $F$ on $k$ vertices satisfies (denoting $\gamma=\gamma(F)$ in the following):

$$(\gamma-\varepsilon)k\sqrt{\log_2 k}\le R(F;2)\le (\gamma+\varepsilon)k\sqrt{\log_2 k}.$$

Suppose first that $\gamma\le \eps/2$. Then the claimed lower bound holds trivially for every $k$.
To see that the upper bound holds for every sufficiently large $k$ in this case, consider a coloring of the edges of $K_n$ in two colors for $n\ge \lfloor(\gamma+\varepsilon)k\sqrt{\log_2 k}\rfloor$.  A larger of the two color classes  corresponds to a graph $G$ satisfying $|E(G)|/|V(G)|\ge (n(n-1)/4)/n=(n-1)/4\ge (\beta\gamma+\varepsilon/10)k\sqrt{\log_2 k}$ for every large enough $k$, where the last inequality holds since 
$\gamma+\varepsilon\ge 4(\beta \gamma+\varepsilon/8)$. 
%Hence, for any coloring of the edges of $K_n$ in two colors for $n\ge \lfloor(\gamma(F)+\varepsilon)k\sqrt{\log_2 k}\rfloor$, the larger of the two color classes will correspond to a graph $G$ satisfying $|E(G)|/|V(G)|\ge (n(n-1)/4)/n=(n-1)/4\ge (\beta\gamma(F)+\varepsilon/10)k\sqrt{\log_2 k}$ for every large enough $k$. 
Hence, by Theorem~\ref{thm:generaldensity}, $G$ contains $F$ as a minor given that $k$ is sufficiently large, as desired.

Moving on, we may thus assume that $\gamma \ge \eps/2$. Furthermore, we may trivially assume w.l.o.g. that $\varepsilon<1/2$.

We first prove the lower bound on $R_h(F;2)$. To do so, we consider a random coloring of $K_n$, where $n=\lfloor(\gamma -\varepsilon) k \sqrt{\log k}\rfloor$, and where each edge is independently colored red with probability $1/2$ and blue with probability $1/2$. Each color class corresponds to $G(n, 1/2)$. 

Choose $\varepsilon^\ast>0$ (only dependent on $\varepsilon)$ small enough such that $\varepsilon^\ast< \varepsilon/2$, $1/2+\varepsilon^\ast\le 1-\varepsilon$, and $1/\sqrt{\log_2(1/(1/2-\varepsilon^\ast))}\ge 1-\varepsilon$. Now, let us set $p^\ast:=1/2+\varepsilon^\ast$ and note that by our choice of $\varepsilon^\ast$, we have $\gamma \ge \varepsilon/2\ge \varepsilon^\ast$, $p^\ast\in [\varepsilon^\ast,1-\varepsilon^\ast]$ and $$n\le (\gamma -\varepsilon)k\sqrt{\log_2 k}\le (1-\varepsilon)\gamma k\sqrt{\log_2 k} \le \frac{\gamma k\sqrt{\log _2 k}}{\sqrt{\log_2(1/(1/2-\varepsilon^\ast))}}=\gamma k\sqrt{\log_{1/(1-p^\ast)}k}.$$ Hence, the preconditions of Theorem~\ref{thm:generalrandom} with parameters $\varepsilon^\ast, p^\ast$ are met, and we find for every sufficiently large $k$ that $G(n,1/2)=G(n,p^\ast-\varepsilon^\ast)$ contains $F$ as a minor with probability less than $\varepsilon^\ast<1/2$. Hence, with positive probability, both color classes of our random edge-coloring of $K_n$ do not contain $F$ as a minor, showing that $R_h(F;2)\ge n+1>(\gamma -\varepsilon)k\sqrt{\log_2 k}$. This concludes the proof of the desired lower bound. 

Next, let us prove the upper bound. So let $k$ be sufficiently large in terms of $\varepsilon$ (to be specified later) and let a coloring of the edges of $K_n$ with colors red and blue be given to us, where $n= \lfloor (\gamma(F)+\varepsilon)k\sqrt{\log_2 k}\rfloor$. Let $G$ denote the graph on the same vertex set as $K_n$ formed by the edges of a color class with the most edges, w.l.o.g. let us say that this color class consists of the red edges. Let $\varepsilon'>0$ be chosen sufficiently small, only depending on $\varepsilon$, such that $\varepsilon'< \min\{\varepsilon/8,1/100\}$ and such that $\frac{1+10\varepsilon'}{\log_2\left(2/(1+10\varepsilon')\right)}<(1+\varepsilon/2)^2.$

Let now $H$ denote the $\lceil \varepsilon' n\rceil$-core of $G$, i.e., the 
(unique) maximal induced subgraph of $G$ with minimum degree at least $\varepsilon' n$.  To see that $H$ is well-defined, recall that for every $d\in\mathbb{R}^+$, every graph of average degree at least $d$ contains a non-empty subgraph of minimum degree at least $d/2$. Since $G$ contains at least half of the edges of $K_n$, it has average degree at least $(n-1)/2$ and hence contains a subgraph with minimum degree at least $(n-1)/4>\varepsilon' n$ for $k$ sufficiently large. 

Note that $H$ can be obtained from $G$ by repeatedly deleting vertices of degree less than $\varepsilon' n$. Hence, we have $|E(H)|\ge |E(G)|-\varepsilon' n^2\ge n(n-1)/4-\varepsilon' n^2\ge (1/4-2\varepsilon')n^2$ for $k$ large enough. This in particular implies that $x:=|V(H)|\ge \sqrt{2|E(H)|}\ge \sqrt{1/2-4\varepsilon'}\cdot n$. In the following, let $\alpha:=x/n\in \left[\sqrt{1/2 -4\varepsilon'},1\right]$ and let $p:=|E(H)|/\binom{x}{2}$ denote the density of $H$. Note that from the above, we immediately obtain that $$p\ge (1/4-2\varepsilon')n^2/\binom{x}{2}\ge (1/2-4\varepsilon')(n/x)^2
= 1/(2\alpha^2)-(4\varepsilon')/\alpha^2
\ge 1/2-4\varepsilon'.$$
Moving on, we now distinguish two cases.

\textbf{Case~1.} $H$ has vertex-connectivity at least $n(\log \log \log n)/(\log \log n)$.

So, $H$ has $x$ vertices, $x\ge \sqrt{1/2-4\varepsilon'}\cdot n$, it  has connectivity at least $x(\log \log \log x)/(\log \log x)$, and it has density $p\ge 1/2-4\varepsilon'\ge 2\varepsilon'$ (by our choice of $\varepsilon'$). We shall show that $H$ satisfies the conditions of Theorem \ref{thm:generalconnected} and thus has an $F$-minor.

Let $p':=p-\varepsilon'$. Then $p'\in [\varepsilon',1-\varepsilon']$. Also, recall that $\gamma\ge \varepsilon/2\ge \varepsilon'$. 

Next, we claim that $x\ge \gamma k\sqrt{\log_{1/(1-p')}(k)}$ for large enough $k$. Indeed, we have

$$x=\alpha n\ge \alpha\left(\gamma+\frac{\varepsilon}{2}\right)k\sqrt{\log_2(k)}\ge \alpha\gamma\left(1+\frac{\varepsilon}{2}\right)k\sqrt{\log_2(k)}
=\frac{(1+\frac{\varepsilon}{2})\alpha}{\sqrt{\log_{(1/(1-p'))}2}}\cdot \gamma k\sqrt{\log_{1/(1-p')}(k)}.$$

It remains to show that $(1+\varepsilon/2)\alpha\sqrt{\log_2(1/(1-p'))}\ge 1$. To do so, we start by recalling that $p'=p-\varepsilon'\ge \frac{1}{2\alpha^2}-\frac{4\varepsilon'}{\alpha^2}-\varepsilon'$. Since furthermore $\alpha\ge \sqrt{\frac{1}{2}-4\varepsilon'}$ and $\varepsilon'<1/100$, we find that in fact
$p'\ge \frac{1}{2\alpha^2}-\frac{4\varepsilon'}{\frac{1}{2}-4\varepsilon'}-\varepsilon'\ge \frac{1}{2\alpha^2}-10\varepsilon'$. Rearranging yields $\alpha\ge \frac{1}{\sqrt{2p'+20\varepsilon'}}$, and so we obtain

$$(1+\varepsilon/2)\alpha\sqrt{\log_2(1/(1-p'))}\ge (1+\varepsilon/2) \sqrt{\frac{\log_2(1/(1-p'))}{2p'+20\varepsilon'}}.$$
Since $p'\ge 1/2-5\varepsilon'$, we furthermore have $$2p'+20\varepsilon'\le 2p'\left(1+\frac{10\varepsilon'}{1/2-5\varepsilon'}\right)=2p'\frac{1+10\varepsilon'}{1-10\varepsilon'}.$$ Plugging this in, it follows that
$$(1+\varepsilon/2)\alpha\sqrt{\log_2(1/(1-p'))}\ge (1+\varepsilon/2) \sqrt{\frac{\log_2(1/(1-p'))}{2p'}} \cdot \sqrt{\frac{1-10\varepsilon'}{1+10\varepsilon'}}.$$ It is not difficult to check that the real-valued function $f:(0,1)\rightarrow \mathbb{R}_+$, $f(y):=\frac{\log_2(1/(1-y))}{2y}$ is monotonically increasing in $y$. In particular, $f(p')\ge f(1/2-5\varepsilon')$, yielding
$$(1+\varepsilon/2)\alpha\sqrt{\log_2(1/(1-p'))}\ge (1+\varepsilon/2)\sqrt{\frac{\log_2\left(2/(1+10\varepsilon')\right)}{1+10\varepsilon'}}\ge 1,$$ where we used our choice of $\varepsilon'$ in the last step. This is the desired inequality, and so we have shown that indeed, $H$ has $x\ge \gamma k\sqrt{\log_{1/(1-p')}(k)}$ vertices for large enough $k$. 

Note that $H$ has density $p=p'+\varepsilon'$. Hence, we have now met all preconditions of Theorem~\ref{thm:generalconnected} with $x$ in place of $n$, $\varepsilon'$ in place of $\varepsilon$, $p'$ in place of $p$, and $H$ in place of $G$. It thus follows that whenever $k$ (and thus $n$ and hence $x$) are sufficiently large in terms of $\varepsilon$, the graph $H$ contains $F$ as a minor. In particular, the supergraph $G$ consisting of the red edges of the coloring of $K_n$ we initially considered contains $F$ as a minor. This is what we wanted to show, and concludes the proof in Case~1.

\textbf{Case~2.} $H$ has vertex-connectivity less than $n(\log \log \log n)/(\log \log n)$. 

This means that there exists a subset of vertices $S\subseteq V(H)$ with $|S|<n(\log \log \log n)/(\log \log n)$ and a partition $(A,B)$ of $V(H)\setminus S$ into two non-empty parts such that there are no edges in $H$ connecting $A$ and $B$. Pick any vertices $a\in A$ and $b\in B$. Then all neighbors of $a$ lie in $A\cup S$, and all neighbors of $b$ lie in $B \cup S$. Since $H$ by definition has minimum degree at least $\varepsilon'n$, it follows that $|A\cup S|, |B\cup S|\ge \varepsilon'n$, and hence that
$$\min\{|A|,|B|\}\ge \varepsilon' n-|S|>\left(\varepsilon'-\frac{\log\log\log n}{\log\log n}\right)n\ge k$$ for $k$ sufficiently large.

Observe that all edges of our initial coloring of $K_n$ spanned between $A$ and $B$ must be colored blue, since they cannot be contained in $G$. It follows that our initial coloring of $K_n$ contains a blue $K_{k,k}$ subgraph. Since $K_{k,k}$ contains $K_k$-minor, and hence also $F$-minor, we find that in Case~2 (provided $k$ is sufficiently large), the blue subgraph of our colored $K_n$ contains $F$ as a minor. This concludes the proof in Case~2. 
 
This concludes the proof of the desired upper bound on $R_h(F;2)$ and thus of the first part of the theorem.  
  
  Now we prove the bounds for small $k$ and give general, but not optimal bounds that hold for any $k$.
  To see the lower bound of $2k-2$, consider a  union of two red vertex-disjoint cliques, one on $k-1$ and the other on $k-2$ vertices, and color all other edges in between them blue.  
The upper bound follows from Theorem 1.16 by Bollob\'as \cite{B}, that claims that 
  $c(k) \leq 8(k-2)\lfloor \log_2 (k-2) \rfloor$. Indeed, let $G$ be a majority color class in a given red/blue coloring of the edges of $K_n$ with $n=32(k-2)\lfloor \log(k-2)\rfloor+1$. It has at least $n(n-1)/4  $ edges. Since  $(n-1)/4 \geq 8(k-2)\lfloor \log_2 (k-2) \rfloor$, we have that $h(G)\geq k$.

  Finally, we treat the small cases $k\in \{3,4\}$.   To see that $R_h(3)=5$, consider an arbitrary edge coloring of $K_5$ in two colors. A largest color class has at least $5$ edges and thus has a cycle, that is a $K_3$-minor. On the other hand, a coloring of $K_4$ where each color class is isomorphic to $P_4$ contains no cycles and thus no $K_3$-minors.
  
 For $R_h(4)$, we shall use the fact that the largest number of edges in an $n$-vertex graph with no $K_4$-minor is at most  $2n-3$, for $n\geq 3$, see for example Bollob\'as \cite{B}.   To see that $R_h(4)>6$, consider a 6-cycle and three additional new edges forming a triangle, color this graph red and  its complement blue. Since any $K_4$-minor has at least four vertices of degree at least three, we see that the color classes are $K_4$-minor free.  We can immediately see that $R_h(4) \leq 8$.  Indeed,  in any  2-edge coloring of $K_8$,   a majority color class  has size at least   $14 > 2\cdot 8 -3$ and thus contains a $K_4$-minor.  Now, we shall show a stronger bound $R_h(4)\leq 7$. Consider an arbitrary red/blue edge coloring of $K_7$ with no monochromatic $K_4$-minor  and let $G$ be a graph whose  edge set consists of the red edges.  We make several observations.  As above  $10\leq |E(G)|\leq 11$. Both $G$ and $\overline{G}$  are not bipartite since otherwise one of the parts has size at least $4$. By a special case of Hadwiger's conjecture, $\chi(G)= \chi(\overline{G})=3$.  Thus $G$ and $\overline{G}$  are three-partite graphs with parts of sizes at most three, i.e., of sizes $3,3,1$ or $3, 2, 2$. There is no monochromatic $K_{3,3}$ without an edge. If one color class, say blue,  has two vertex disjoint triangles, then there are two blue edges between them or three edges forming a $P_4$ between them. 
By considering the remaining vertex, one can see that there is a monochromatic copy of $K_4$-minor, a contradiction. Both the red and the blue graphs are three-chromatic with color classes of sizes $3,2,2$. This leaves a final, not too long, case analysis to demonstrate the existence of a monochromatic $K_4$-minor, a contradiction.
 \end{proof}

\begin{proof}[Proof of Theorem \ref{multicolor}]
To see that $R_h(3; \ell) \geq 2\ell+1$, consider a decomposition of $K_{2\ell}$ into $\ell$ Hamiltonian paths. Since any $K_3$-minor contains a cycle, we have that this coloring has no monochromatic $K_3$-minors. 
To see that $R_h(3; \ell) \leq 2\ell+1$, consider an arbitrary edge-coloring of $K_{2\ell+1}$ in $\ell$ colors. Then some color class has at least $2\ell +1$ edges and thus contains a cycle, that is a $K_3$-minor.

The upper bound of the theorem holds in a stronger form, for any $\ell$. Let $n=R_h(k; \ell)-1$ and consider an edge-coloring of $K_n$  in $\ell$ colors with no monochromatic $K_k$-minor. 
From Theorem \ref{Thomason-1}, we know that any $K_k$-minor free graph on $n$ vertices has at most $\beta k \sqrt{\log k} n (1+o_k(1))$ edges. 
 Since the total number of edges on $n$ vertices is $\binom{n}{2}$, we have that 
$\binom{n}{2} \leq  \ell \cdot \beta k \sqrt{\log k} n (1+o_k(1))$. This implies that $n \leq  \ell \cdot 2\beta k \sqrt{\log k}(1+o_k(1))$.

For the lower bound,  let $\varepsilon$ be a  small constant,  $0<\varepsilon <1$,  $k$ be sufficiently large.  Consider a graph $H'$ on $k\sqrt{\log k}$ vertices and $m=\beta k^2\log k (1- \varepsilon^2/5)$ edges that is $K_k$-minor free, that exists by Theorem \ref{Thomason-1}.  Let $H$ be obtained from $H'$ by attaching a pendant edge. 
 Wilson \cite{W} showed that $K_n$ is a pairwise edge-disjoint union of copies of  $H$ if $n$ is sufficiently large, $|E(H)|$ divides $\binom{n}{2}$, and the greatest common divisor of the degrees of $H$ divides $n-1$. Since there is a vertex of degree $1$ in $H$, the last condition is satisfied for any $n\geq 2$. 
 Let $\ell$ be sufficiently large, so that for some $n$ satisfying $\ell \cdot 2\beta k \sqrt{\log k} (1-\varepsilon) \leq n \leq  \ell \cdot 2\beta k \sqrt{\log k} (1-\varepsilon/2)$, $n$ satisfies conditions of Wilson's  theorem, in particulst $|E(H)|$ divides $\binom{n}{2}$.
 Then $H$ decomposes $K_n$. 
  Alon and Yuster \cite{AY}  proved that if  a fixed graph  $H$ decomposes $K_n$, then there is a coloring of $E(K_n)$ in at most $(1+o(1)) (n-1)|V(H)|/(2|E(H)|)$ colors, such that each color class is a pairwise  vertex-disjoint union of copies of $H$. So, we take $\ell$ sufficiently large that $o(1)$ term is at most $\varepsilon/2$. That is, every color class is almost an $H$-factor and it is $K_k$-minor free.  Since the number of colors is at most $(1+\varepsilon/2) (n-1)|V(H)|/(2|E(H)|) \leq  (1+\varepsilon/2)\ell \cdot 2\beta k \sqrt{\log k} (1-\varepsilon/2) /(2\beta k \sqrt{\log k}(1-\varepsilon^2/5)) \leq \ell$, we have a 
  coloring of $E(K_n)$ in $\ell$ colors with no monochromatic copy of a $K_k$-minor. 
\end{proof}

\section{Concluding remarks}
We determined the asymptotic value of $R_h(k; \ell)$ when $\ell=2$ and when $k\gg \ell\gg 1$. It would be interesting to extend these results, and in particular asymptotically determine the value of $R_h(k;\ell)$ as $\ell\ge 3$ is fixed and $k\rightarrow \infty$. It would also be nice to find an easy direct proof of an upper bound on $R_h(k; 2)\leq c k\sqrt{\log k}$, for some, maybe large  constant $c$,  but avoiding the density Theorem \ref{Thomason-1}.

To prove the lower bound on $R_h(k; \ell)$, one can also use the following packing result by taking $G$ being an $H$-factor, where $H$ is a graph on $k\sqrt{\log k}(1+o(1))$ vertices, density $\beta k \sqrt{\log k}$ and no $K_k$-minor.
Pack  $K_{n(1-\varepsilon)}$ with copies of $G$, give each such copy of $G$ its own color. Decompose the remaining edges of $K_n$ into forests using the Nash-Williams theorem and give each such forest a new own color. 
 \begin{theorem}[Messuti, R\"odl, Schacht \cite{MRS}]\label{MRS}
For any $\varepsilon>0$ and any $\Delta\in \mathbb{N}$, for any non-trivial minor closed family $\cG$ there is $n_0$ such that for any $n\geq n_0$ and any integer $\ell$ the following holds.
If $G\in \cG$, $\Delta(G) \leq \Delta$ and $|V(G)|\leq n$,  and $\ell |E(G)| \leq \binom{n}{2}$ then $\ell$ copies of $G$ pack in $K_{(1+\varepsilon) n}$.
\end{theorem}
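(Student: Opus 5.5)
The approach I would take is to use the separator structure of minor-closed classes to reduce the problem to packing many copies of a few \emph{bounded-size} graphs, and then to use a nibble-type approximate decomposition of the complete graph. The first step is structural. Fix the non-trivial minor-closed family $\cG$. Every $G\in\cG$ has at most $c_{\cG}|V(G)|$ edges, and by the separator theorem of Alon, Seymour and Thomas, every $G\in\cG$ on $m$ vertices has a balanced separator of size $O(\sqrt m)$. Applying this recursively, for any constant $s$ we can delete a set $X\subseteq V(G)$ with $|X|\le \delta n$ and split $G-X$ into components of size at most $s$, where $\delta=\delta(s)\to0$ as $s\to\infty$. Since $\Delta(G)\le\Delta$, the set $X$ together with its edges to the rest carries at most $\Delta\delta n$ edges. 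The components of $G-X$ fall into at most $2^{\binom{s}{2}}$ isomorphism types of graphs on at most $s$ vertices. So each copy of $G$ is, up to a negligible part, a disjoint union of bounded-size graphs $F_1,F_2,\dots$ with prescribed multiplicities.

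The second step is to pack the bulk. Write $N=(1+\varepsilon)n$ and split $V(K_N)$ into a main part $V_0$ of size $(1+\varepsilon/2)n$ and a reserve $W$ of size $\varepsilon n/2$. Inside $K[V_0]$ I would run a Rödl nibble / Frankl--Rödl type approximate decomposition, in the form of Pippenger--Spencer applied to an auxiliary hypergraph. Its vertices are the edges of $K[V_0]$ together with pairs (vertex, copy index $i\le\ell$), and its hyperedges are placements of a small piece $F_j$ into copy $i$. This produces, for every $i$, a vertex-disjoint collection of small pieces realising almost all components of $G-X$. The edge condition $\ell|E(G)|\le\binom n2$, combined with the slack $(1+\varepsilon/2)^2$ in the number of available edges, is exactly what makes the auxiliary hypergraph nearly regular with small codegrees. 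The Pippenger--Spencer theorem then covers all but an $o(1)$ fraction.

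The third step is completion, which I expect to be the main obstacle. After the nibble, each copy $i$ still has $o(n)$ unembedded pieces, plus the separator set $X$ whose neighbours are already placed. I would embed these leftover vertices greedily and randomly into the reserve $W$, one copy at a time. A vertex $x$ of copy $i$ must be mapped to a vertex $w\in W$ such that all edges from $w$ to the already placed neighbours of $x$ (at most $\Delta$ of them) are unused by other copies. To make this work, the nibble must be run with degree control: every vertex of $V_0$ should be incident to few used edges towards $W$, and the edges between $V_0$ and $W$ are not used in the nibble at all. Each copy uses only $O(\delta n)$ reserve vertices. Hence the total number of edges into any fixed reserve vertex is about $\ell\cdot\delta n\cdot\Delta/|W|$, and $\ell\le\binom n2/|E(G)|=O(n)$ when $|E(G)|=\Theta(n)$; the case of sparse $G$ can be handled by first merging copies.

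Balancing these quantities, so that $\delta(s)\Delta\ll\varepsilon$ while the random greedy choices keep every vertex's used-degree below $\varepsilon n/4$ (via a martingale or Lovász local lemma argument), is the delicate part. This is where the bounded maximum degree hypothesis is indispensable. Once it is established, all $\ell$ copies are edge-disjoint in $K_{(1+\varepsilon)n}$, as required.
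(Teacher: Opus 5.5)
The paper does not prove this statement. It is quoted from Messuti, R\"odl and Schacht and only invoked in the concluding remarks, so your proposal has to stand on its own. The architecture is the natural one: delete a set $X$ of $\delta n$ vertices so that $G-X$ has components of size at most $s$, pack these pieces approximately in $V_0$, and finish in a reserve $W$. But the nibble step as you formulate it does not deliver what you need.

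Your auxiliary hypergraph has no vertex for the individual components of a copy. A matching therefore only gives copy $i$ vertex-disjoint pieces of \emph{some} types. Nothing stops copy $i$ from receiving more pieces of a sparse type than $G-X$ contains, while many pieces of a dense type stay unplaced. You need a vertex $(i,p)$ for every component $p$ of every copy. Once you add these, the hypergraph is far from regular:
\begin{itemize}
\item the vertex $(i,p)$ has degree about $|V_0|^{|p|}$, which depends on $|p|$;
\item after weighting hyperedges by $1/|V_0|^{|p|}$, the vertices $(v,i)$ have weighted degree at most about $(1+\varepsilon/2)^{-1}$, and host edges at most about $(1+\varepsilon/2)^{-2}$, against $1$ for the $(i,p)$.
\end{itemize}
The slack makes the host side unsaturated, not ``nearly regular'', so Pippenger--Spencer does not apply as stated. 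You need a weighted or fractional version (such as Kahn's) that covers almost all the $(i,p)$. Even that only controls the uncovered fraction globally.

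The more serious gap is the completion, which you identify as the main obstacle and then defer. It needs two properties of the matching that no plain nibble gives:
\begin{enumerate}
\item[(a)] \emph{each} copy $i$ individually has only $o(n)$ uncovered vertices; otherwise copy $i$ need not even fit into $W$;
\item[(b)] for every $u\in V_0$, the number of copies $i$ in which $u$ hosts a neighbour of $X_i$ is about $\Delta\delta\ell$ at most.
\end{enumerate}
Property (b) is what bounds the number of $u$--$W$ edges the completion will demand. Vertex $u$ has only $\varepsilon n/2$ edges into $W$, and each copy in which $u$ hosts a neighbour of $X_i$ needs its own such edge. So if an (a priori arbitrary) matching puts neighbours of separator vertices on $u$ in more than $\varepsilon n/2$ of the linearly many copies, no extension exists at all.

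Your requirement that ``every vertex of $V_0$ should be incident to few used edges towards $W$'' cannot be imposed inside the nibble, since the nibble uses no $V_0$--$W$ edges. That quantity is fixed entirely by where the nibble places the neighbours of $X_i$. To close the gap you need one of the following:
\begin{itemize}
\item a pseudorandom hypergraph-matching theorem (in the style of Alon--Yuster or Ehard--Glock--Joos), applied to weight functions encoding (a) and (b);
\item a direct random greedy packing in which these quantities are tracked.
\end{itemize}
Given (a) and (b), the random greedy embedding into $W$ does go through once $\delta$ is small in terms of $\varepsilon$ and $\Delta$: a Chernoff/Freedman bound controls the loads on reserve vertices, and the forbidden choices at each step are at most $\Delta$ times the maximum used degree into $W$. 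As written, though, that is exactly the part of the argument that is missing.
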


Note that our Theorem~\ref{main} nails down the value of $R_h(F;2)$ precisely for every graph $F$ with $\gamma$ bounded away from zero, and thus for almost all graphs on $k$ vertices with at least $k^{1+\varepsilon}$ edges for any fixed $\varepsilon>0$~\cite{MyTh}. However, in the sparse regime, i.e., when $F$ has only $k^{1+o(1)}$ edges, it only says that $R_h(F;2)=o(k\sqrt{\log k})$. Thus, it would be interesting to determine $R_h(F;2)$ more precisely for very sparse graphs, with, say, a constant average degree. A natural first step is to consider $R_h(F)$ for trees $F$ on $k$ vertices. In the case that $F=P_k$ is a path on $k$ vertices, we simply have $R_h(P_k;2)=R(P_k;2)=\left\lfloor \frac{3k-2}{2} \right\rfloor$ for every $k$, since any graph contains $P_k$ as a minor if and only if it contains $P_k$ as a subgraph, and since the Ramsey number of a path was determined to equal the above value by Gerencs\'{e}r and Gy\'{a}rf\'{a}s~\cite{gyarfas}. Moreover, also for multiple colors the known results on the Ramsey numbers of paths extend to their minor-Ramsey numbers. On the other extreme, for a star $S_k=K_{1,k}$ on $k+1$ vertices, we show that $R_h(S_k;2)$ is by a factor two smaller than $R(S_k;2)=2k+O(1)$.
\begin{proposition}\label{prop:stars}
We have $R_h(S_k;2)=k+o(k)$.
\end{proposition}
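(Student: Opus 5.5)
The lower bound is immediate: $S_k=K_{1,k}$ has $k+1$ vertices, so no graph on $k$ vertices contains an $S_k$-minor. Hence any $2$-edge-coloring of $K_k$ avoids a monochromatic $S_k$-minor, and $R_h(S_k;2)\ge k+1$. The work is in the upper bound: for every fixed $\varepsilon>0$ and $k$ large, every red/blue coloring of $K_n$ with $n=\lceil(1+3\varepsilon)k\rceil$ has a monochromatic $S_k$-minor.

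The tool is the following observation. If a graph $G$ contains a connected vertex set $C$ that dominates $V(G)$, then contracting $C$ to one vertex yields a vertex adjacent to all of $V(G)\setminus C$. Hence $G$ has an $S_{n-|C|}$-minor. So it suffices to find, in one of the two colors, a connected dominating set of size at most $n-k$.

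I would split into three cases according to a threshold degree $\varepsilon n$.
\begin{enumerate}
\item \emph{Some vertex $v$ has red degree less than $\varepsilon n$.} Then $v$ has blue degree at least $(1-\varepsilon)n-1\ge k$ for $k$ large, since $n\ge(1+3\varepsilon)k$. So $v$ together with its blue neighbours is already a blue copy of $S_k$, with no contraction needed.
\item \emph{Red has minimum degree at least $\varepsilon n$ but is disconnected.} Let $A$ be a red component and $B=V\setminus A$. All edges between $A$ and $B$ are blue. Pick $a\in A$ and $b\in B$; then $C=\{a,b\}$ is blue-connected and blue-dominates every vertex. This gives a blue $S_{n-2}$-minor, and $n-2\ge k$.
\item \emph{Red is connected with minimum degree at least $\varepsilon n$.} Here we need a connected dominating set of size $O_\varepsilon(1)$ in the red graph.
\end{enumerate}

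The last case is the main obstacle, and I would handle it directly by the standard greedy/probabilistic argument.
\begin{itemize}
\item \emph{Domination.} A uniformly random set $T$ of $t=\lceil \varepsilon^{-1}\ln(1/\varepsilon^2)\rceil$ vertices misses the closed neighbourhood of a fixed vertex with probability at most roughly $(1-\varepsilon)^t\le\varepsilon^2$. So some choice of $T$ leaves at most $\varepsilon^2 n$ vertices undominated. Add each of these to $T$, obtaining a dominating set $D$ with $|D|\le t+\varepsilon^2 n$.
\item \emph{Connecting.} Since red is connected and $D$ is dominating, any two components of the subgraph induced by $D$ lie at distance at most $3$ in the red graph. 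Repeatedly joining two components with at most two extra vertices turns $D$ into a connected dominating set $C$ with $|C|\le 3|D|\le 3t+3\varepsilon^2 n$.
\end{itemize}
Since $t$ depends only on $\varepsilon$, we get $|C|\le 4\varepsilon^2 n$ for $n$ large. Then $n-|C|\ge (1-4\varepsilon^2)(1+3\varepsilon)k\ge k$ for $\varepsilon$ small, giving a red $S_k$-minor.

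Alternatively, the known bound $\gamma_c(G)\le(1+o(1))\frac{\ln\delta}{\delta}n$ on the connected domination number in terms of the minimum degree $\delta$ could be quoted instead. Either way, letting $\varepsilon\to0$ gives $R_h(S_k;2)\le(1+o(1))k$, which combined with the lower bound proves the proposition.
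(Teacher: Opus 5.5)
Your proof is correct, but in the main case it takes a different route from the paper.

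The paper also begins by disposing of any vertex with monochromatic degree at least $k$. This leaves both $G_r$ and $G_b$ with minimum degree at least $s=\lceil\varepsilon k\rceil$. It then chooses $s$ random vertices and shows that, with positive probability, this set $S$ dominates \emph{both} colour classes simultaneously. Connectivity then comes for free: $G_r[S]$ and $G_b[S]$ are complementary graphs, so one of them is connected and can be contracted to the centre of the star.

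You instead work inside a single colour. You split according to whether red is globally connected; a disconnected red graph forces a blue complete bipartite graph, and hence a two-vertex blue connected dominating set. In the connected case you build a red connected dominating set by domination followed by merging components, at the cost of a factor $3$.

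The paper's argument is shorter, because the complementarity trick removes both the connectivity case split and the merging step. Yours is more modular and uses only standard single-graph facts about connected domination. That is also why the known connected-domination bound you mention can be substituted for your hand-made construction.

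One wording fix: it is not true that \emph{any} two components of the graph induced by $D$ are at distance at most $3$; a long path with $D$ consisting of every third vertex is a counterexample. What holds, and is all your repeated merging needs, is weaker. As long as this graph is disconnected, a shortest path from a component $X$ to $D\setminus X$ has length at most $3$. Otherwise its third vertex lies outside $D$ and is dominated by some vertex of $D$. By minimality of the path, that vertex cannot lie in $X$, so it gives a path of length $3$ from $X$ to $D\setminus X$, a contradiction.
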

\begin{proof}
Since no graph on at most $k$ vertices contains $S_k$ a minor, we have $R_h(S_k;2)\ge k+1$, and hence it suffices to
prove the upper bound. So fix $\varepsilon>0$ arbitrarily and let us show that for every sufficiently large $k$, every red/blue-coloring of the edges of $K_{\lceil(1+\varepsilon)k\rceil}$ contains a monochromatic $S_k$-minor. In the following, we set $n:=\lceil(1+\varepsilon)k\rceil$. Consider any red/blue-coloring of the edges of $K_n$ and let $G_r,G_b$ denote the spanning subgraphs of $K_n$ formed by the red and blue edges, respectively.

If any vertex in any of $G_r, G_b$ has degree at least $k$, then our coloring contains $S_k$ as a monohromatic subgraph and we are done. Hence, in the following we may assume that $\Delta(G_r), \Delta(G_b)<k$. Since $G_r$ and $G_b$ are complements of each other, this in particular implies that $G_r$ and $G_b$ have minimum degree at least $(n-1)-(k-1)\ge \lceil\varepsilon k\rceil=:s$ each.

Now, let $S\subseteq V(K_n)$ be a random subset of vertices created as follows. Randomly pick vertices $v_1,\ldots,v_s$ from $V(K_n)$ with repetition, i.e. such that $v_i$ is uniformly random in $V(K_n)$ independently from all other $v_j$. Finally, we set $S:=\{v_1,\ldots,v_s\}$. We have $|S|\le s$ and hence $|V(K_n)\setminus S|\ge n-s=k$. 

We now claim that for every sufficiently large $k$, the set $S$ simultaneously forms a dominating set in both of the graphs $G_r,G_b$. Indeed, for a fixed vertex $v\in V(K_n)$ and a fixed color $i\in \{r,b\}$, the probability that $v$ has no neighbors in $S$ in the graph $G_i$, i.e., that none of the vertices $v_1,\ldots,v_s$ was picked from $N_{G_i}(v)$, is 
$$\left(1-\frac{d_{G_i}(v)}{n}\right)^s\le \exp\left(-\frac{d_{G_i}(v)s}{n}\right)\le \exp\left(-\frac{s^2}{n}\right)=\exp(-\Theta(k)).$$
Hence, the probability that some vertex $v$  is not dominated by $S$ in $G_i$ for some color $i\in\{r,b\}$ is at most $n\cdot 2\cdot \exp(-\Theta(k))=O(k)\cdot \exp(-\Theta(k))\rightarrow 0$ for $k\rightarrow \infty$. Hence, indeed for every sufficiently large $k$, with positive probability, the set $S$ is a dominating set of both $G_r$ and $G_b$. 

Now, assuming $k$ is sufficiently large, fix a set $S$ of size at most $s$ which dominates $G_r$ and $G_b$. Since $G_r[S]$ and $G_b[S]$ are complementary graphs on vertex set $S$, at least one of them, say $G_r[S]$, must be connected. But then we obtain $S_k$ as a minor of $G_r$ by contracting $S$ (more precisely, all edges contained in it) into a single vertex. This is the desired outcome and concludes the proof.
\end{proof}
 
\section{Acknowledgements} The first author is grateful to Noga Alon for several very fruitful discussions and suggestions. 
She also thanks Jan Malte Weirich for discussions.

The second author gratefully acknowledges funding from the Ambizione grant No. 216071 of the Swiss National Science Foundation.

\end{document}